\documentclass[reqno,11pt]{amsart}
\usepackage{geometry}
\usepackage{mathtools,amssymb,amsthm,mathrsfs,color,lineno,paralist,graphicx,float}
\usepackage[colorlinks,
linkcolor=blue,
anchorcolor=green,
citecolor=blue, 
]{hyperref}
\usepackage{tikz}
\usepackage{graphicx}
\usetikzlibrary{arrows.meta}

\usepackage{enumitem}
\usepackage[T1]{fontenc}
\usepackage[utf8]{inputenc}
\usepackage{subfig} 
\usepackage[justification = centering, labelsep =period]{caption}

\usepackage{calc}

\definecolor{bleu1}{RGB}{0,57,128}
\def\bleu1{\color{bleu1}}

\usepackage{etoolbox}
\patchcmd{\section}{\normalfont}{\normalfont \bleu1}{}{}
\patchcmd{\subsection}{\normalfont}{\normalfont \bleu1}{}{}
\patchcmd{\subsubsection}{\normalfont}{\normalfont \bleu1}{}{}

\newtheorem{proposition}{Proposition}[section]
\newtheorem{theorem}{Theorem}[section]
\newtheorem{definition}{Definition}[section] 
\newtheorem{lemma}{Lemma}[section]

\newtheorem{corollary}{Corollary}[section]

\newtheorem{problem}{Problem}[section]

\newcommand{\Z}{{\mathbb Z}}
\newcommand{\C}{{\mathbb C}}
\newcommand{\R}{{\mathbb R}}

\def\0{{\mathbf 0}}

\usepackage{tikz,tikz-3dplot}
\tdplotsetmaincoords{80}{45}
\tdplotsetrotatedcoords{-90}{180}{-90}
\tikzset{surface/.style={draw=blue!70!black, fill=blue!40!white, fill opacity=.6}}

\usepackage{pgfplots}
\usepgfplotslibrary{polar}
	\usepgfplotslibrary{polar}
\pgfplotsset{compat=1.17}
\makeatletter
\tikzset{reuse path/.code={\pgfsyssoftpath@setcurrentpath{#1}}}
\makeatother

\begin{document}

\author{Zhenfu Wang}
\address{
	Chern Institute of Mathematics and LPMC, Nankai University, Tianjin 300071, China
}
\email{zhenfuwang@mail.nankai.edu.cn}

 \author{Bei Zhang}
 \address{
 Chern Institute of Mathematics and LPMC, Nankai University, Tianjin 300071, China} 
 \email{beizhang@nankai.edu.cn}

\author{Qi Zhou}
\address{
	Chern Institute of Mathematics and LPMC, Nankai University, Tianjin 300071, China
}
\email{qizhou@nankai.edu.cn}

\begin{abstract}
 We establish an ultimate Ishii--Pastur theorem for whole-line ergodic
block Jacobi operators. We prove that, for almost every realization,
the restriction of a maximal spectral measure to the region where the
smallest nonnegative Lyapunov exponent is strictly positive is carried
by a Borel set of zero capacity. In the scalar case, this
settles the whole-line problem formulated by Damanik and Fillman
\cite[Problem~4.7.18]{Damanik-Fillman-book}.
\end{abstract}

\title[]{The Ultimate Ishii--Pastur Theorem for Whole-Line Ergodic Block Jacobi Operators}

\maketitle{\Large }

\section{introduction}

The Ishii--Pastur--Kotani theorem \cite{Ishii,Kotani,Pastur} is one of the fundamental links
between dynamics and spectral theory for one-dimensional ergodic
Schr\"odinger operators  \cite{Damanik-Fillman-book,DF2}. Let $(\Omega,\mathbb P,T)$ be an invertible
ergodic probability dynamical system,  that is, $\Omega$ is a compact metric space, $T:\Omega\to\Omega$ is a
homeomorphism, and $\mathbb P$ is a $T$-invariant ergodic Borel probability
measure. Consider  one-dimensional ergodic
Schr\"odinger operators on $\ell^2(\Z):$
\begin{equation}\label{scalar}
(H_\omega u)_n
=
u_{n+1}+u_{n-1}+V(T^n\omega)u_n,
\qquad n\in\mathbb Z.
\end{equation}
Let $L(E)$ denote the Lyapunov exponent of the associated Schr\"odinger
cocycle and set
$$
\mathcal Z=\{E\in\mathbb R:L(E)=0\}.
$$
The Ishii--Pastur--Kotani theorem identifies the almost-sure
absolutely continuous spectrum as
$\Sigma_{\mathrm{ac}}
=
\overline{\mathcal Z}^{\,\mathrm{ess}}.$
In particular, the restriction of a maximal spectral measure to the
positive-Lyapunov region $\mathbb R\setminus\mathcal Z$ is singular
with respect to Lebesgue measure and is therefore carried by a Borel
set of Lebesgue measure zero.

This naturally leads to a finer question: how ``small'' can a supporting set in the positive Lyapunov exponent regime be chosen?
This yields the following problem put forward by Damanik and Fillman \cite{Damanik-Fillman-book}.

\begin{problem}\label{open-problem}\cite[Problem~4.7.18]{Damanik-Fillman-book}
Prove the ultimate Ishii–Pastur theorem for whole-line ergodic operators \eqref{scalar}. More precisely, show that the restriction of spectral measure to $\mathbb{R}\backslash\mathcal{Z}$ enjoys a support of zero capacity\footnote{ For the precise definition, see Definition \ref{cap}.} for almost every $\omega\in\Omega$.
\end{problem}

We give an affirmative answer to this problem:

\begin{theorem}[Ultimate Ishii--Pastur theorem]\label{thm:scalar}
For $\mathbb P$-almost every $\omega\in\Omega$, the restriction of any maximal spectral
measure of $H_\omega$ to $\{E:L(E)>0\}$ is carried by a Borel set
of zero capacity.
\end{theorem}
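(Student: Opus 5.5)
The plan is to reduce the statement to a potential-theoretic fact about the set of energies where the transfer matrices grow slowly along some sequence of times, and then use subharmonicity of the finite-volume Lyapunov exponents.

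\textbf{Step 1: a Gilbert--Pearson/Jitomirskaya--Last type support statement.} We use the spectral-theoretic input that, for a maximal spectral measure $\mu_\omega$ of the whole-line operator, $\mu_\omega$-almost every $E$ admits a polynomially bounded generalized eigenfunction (Sch'nol/Berezanskii). Write $A_n(\omega,E)$ for the $n$-step transfer matrix. If in addition $L(E)>0$ and $u$ is polynomially bounded at both $\pm\infty$, then Oseledets-type reasoning, applied pointwise in $E$ (not in $\omega$), does not directly give anything, since the full-measure set of $\omega$ depends on $E$. Instead we would aim to show that $\mu_\omega|_{\{L>0\}}$ is carried by
\[
S_\omega=\Bigl\{E:\ \liminf_{n\to\infty}\tfrac1n\log\|A_n(\omega,E)\|<L(E)\Bigr\}
\]
(or the analogous set for $n\to-\infty$). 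Here is why. At an energy with $L(E)>0$ where $\tfrac1n\log\|A_n\|\to L(E)$ in both directions, the Oseledets splitting gives a polynomially bounded solution only if it equals the contracting direction at both ends. By a Ruelle/Craig--Simon-style argument, this then forces the solution to be $\ell^2$, so $E$ is an eigenvalue. Eigenvalues are countable, hence of zero capacity. This gives a decomposition $\{L>0\}\cap\operatorname{supp}\mu_\omega\subset S_\omega\cup\{\text{eigenvalues}\}$ up to $\mu_\omega$-null sets.

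\textbf{Step 2: $S_\omega$ has zero capacity.} We use Craig--Simon. The functions $u_n(E)=\tfrac1n\log\|A_n(\omega,E)\|$ are subharmonic on $\C$, locally uniformly bounded above, and $\limsup u_n\le L$ everywhere by the Craig--Simon upper semicontinuity theorem. For $\mathbb P$-a.e.\ $\omega$ we have $u_n(E)\to L(E)$ for Lebesgue-a.e.\ $E$ (Fubini) and for all nonreal $E$, where $L$ is harmonic and the limit is uniform. By the Brelot--Cartan theorem, $\limsup u_n$ has upper semicontinuous regularization equal to $L$, and the set $\{\limsup u_n<L\}$ is polar. The liminf version is handled by passing to subsequences: the exceptional set for a fixed subsequence is polar, but $S_\omega$ is a union over subsequences. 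This is the crux. To control it, we would fix rational $q$ and consider $S_{\omega,q}=\{E:\liminf u_n<q<L(E)\}$, and exploit the fact that the energy-dependent bad subsequences can be encoded via $\inf_{n\ge N}u_n$. The aim is to show $\{E:\limsup_N \inf_{n\ge N}u_n(E)<L(E)\}$ is polar, using a Hartogs-type lemma: if subharmonic $u_n$ converge to $L$ in $L^1_{loc}$ and $\limsup u_n\le L$, then off a polar set one has convergence along the full sequence. The known theorem that only holds for lim sup, which is why this is the main obstacle.

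\textbf{Step 3: fallback for the obstacle.} If the liminf set cannot be shown polar, we would replace $S_\omega$ by the limsup set. This requires strengthening Step 1: for $\mu_\omega$-a.e.\ $E$ with $L(E)>0$ that is not an eigenvalue, we need $\limsup_n u_n(E)<L(E)$. We would try to obtain this from the Sch'nol eigenfunction together with a uniform Oseledets/avalanche-principle argument. The idea is that if $u_n(E)$ were close to $L(E)$ for infinitely many $n$ at both ends, then large deviation-free avalanche estimates would force exponential decay of $u$ along those scales, and Step 1's $\ell^2$ conclusion would follow. The countable union over rationals and both half-lines then yields a zero-capacity Borel carrier of $\mu_\omega|_{\{L>0\}}$ for a.e.\ $\omega$.
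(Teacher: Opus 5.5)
There is a genuine gap, and it sits where you locate the crux yourself.

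\textbf{Why your route does not close.} Your Step~1 is the Ruelle--Oseledets whole-line argument. It needs the \emph{full} limit $\frac1n\log\|A_n(\omega,E)\|\to L(E)$ at both $+\infty$ and $-\infty$, so you must show that the non-Lyapunov set $S_\omega$ has zero capacity. Potential theory only gives $\limsup_n u_n=L$ quasi-everywhere. The ``Hartogs-type lemma'' you invoke to upgrade this to full convergence off a polar set is false. Take $u_n(z)=\frac{4}{\log n}\log|z-a_n|$, where for each large $k$ the centres $a_n$, $2^k\le n<2^{k+1}$, are chosen so that the discs $\{|z-a_n|\le n^{-1/4}\}$ cover the unit disc. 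Then $u_n\to0$ in $L^1_{\mathrm{loc}}$ and $\limsup_n u_n\le 0$ everywhere, yet $\liminf_n u_n\le -1$ on the whole unit disc.

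For transfer matrices, whether the non-Lyapunov set is polar is an open question; the Gorodetski--Kleptsyn and Kleptsyn--Quintino work cited in the introduction suggests it may fail. So Step~2 cannot be completed as planned. Step~3 does not repair this:
\begin{itemize}
\item The implication ``Shnol solution and not an eigenvalue $\Rightarrow \limsup_n u_n(E)<L(E)$'' is only asserted.
\item The avalanche principle needs norm control at all intermediate scales, which is precisely the $\liminf$ information that is missing.
\item Smallness of $u$ along a sparse sequence of scales does not give $\ell^2$.
\item The good scales at $+\infty$ and $-\infty$ may be unrelated subsequences.
\end{itemize}
More fundamentally, growth of the one-sided transfer-matrix norms cannot tell you whether the initial vector is simultaneously close to the most contracted directions at both ends.

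\textbf{The missing idea.} You need to couple the two ends at the \emph{same} scale, so that a one-sided $\limsup$ suffices. The paper flips $\mathbb Z$ about the bond $\{-1,0\}$. This turns $H_\omega$ into a unitarily equivalent half-line operator with $2\times 2$ blocks and the same maximal spectral measure. Its Dirichlet fundamental matrix $D_n(E,\omega)$ sends $(u_0,u_{-1})$ to $(u_n,u_{-n-1})$, and $\det D_n$ equals $\det(E-H_{\omega,[-n,n-1]})$ up to hopping factors, which are trivial here.

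The eigenvalue counting measures of the symmetric boxes $[-n,n-1]$ converge to the density of states. The upper envelope theorem and the Thouless formula then give $\limsup_n\frac1n\log|\det D_n(E,\omega)|=2L(E)$ off a capacity-zero set. Craig--Simon gives $\limsup_n\frac1n\log\|D_n(E,\omega)\|\le L(E)$ for every $E$. Hence $\limsup_n\frac1n\log s_2(D_n(E,\omega))\ge L(E)$ quasi-everywhere.

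So along a single subsequence, every nonzero initial vector is exponentially expanded at $n$ or at $-n-1$. This contradicts the polynomial bound that the Shnol solution satisfies for all $n$. Only $\limsup$ information is used, so the non-Lyapunov set never has to be controlled.
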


While Theorem \ref{thm:scalar}  resolves the open problem formulated by Damanik and Fillman, our methods go beyond the scalar setting. We establish the
whole-line ultimate Ishii--Pastur theorem for arbitrary finite-width
ergodic strip operators, and more generally for ergodic block Jacobi
operators with invertible hopping matrices. 
To make this precise, 
 let
$A:\Omega \rightarrow \operatorname{GL}(m,\mathbb C),B:\Omega\rightarrow \operatorname{Her}(m)
$
be measurable functions satisfying
\begin{equation*}
\|A\|_{L^\infty(\Omega)}+\|B\|_{L^\infty(\Omega)}+\int_\Omega\log^+\|A(\omega)^{-1}\|d \mathbb P(\omega)<\infty.
\end{equation*}
For $n\in\mathbb Z$, set
$
A_n(\omega):=A(T^n\omega),
B_n(\omega):=B(T^n\omega).
$
The associated whole-line block Jacobi operator $\mathcal H_\omega$ acts on
$\ell^2(\mathbb Z,\mathbb C^m)$ by
$$
(\mathcal H_\omega u)_n=A_{n-1}(\omega)^*u_{n-1}+B_n(\omega)u_n+A_n(\omega)u_{n+1},
\quad n\in\mathbb Z.
$$
 These operators arise naturally in
analytic theory of matrix orthogonal polynomials \cite{DPS}, XY spin chain models \cite{HSS}, and one can rewrite the dynamically defined finite-range operators as block Jacobi operators \cite{Puig}.

For each $E\in\mathbb R$, the eigenvalue equation $\mathcal H_\omega u=Eu$ induces
the $2m$-dimensional cocycle $(T,T_E)$, where
\begin{equation*}
T_E(\omega)=
\begin{pmatrix}
A(\omega)^{-1}(EI_m-B(\omega))
&-A(\omega)^{-1}A(T^{-1}\omega)^*\\
I_m&0
\end{pmatrix}.
\end{equation*}
Denote its  Lyapunov exponents by $$
L_1(E)\geq\cdots\geq L_m(E)\geq -L_m(E)\geq \cdots \geq -L_1(E)$$ (see Section \ref{cocycle} for the precise definition).
Define the region of
strictly positive Lyapunov exponents by
\begin{equation}\label{equ:+}
S_+:=\left\{E\in\mathbb R:L_m(E)>0\right\}.
\end{equation}

Denote $\mu_\omega$ be the maximal spectral measure of $\mathcal H_\omega$. Our main
result is the following whole-line ultimate Ishii--Pastur theorem for
ergodic block Jacobi operators.

\begin{theorem}\label{thm:main}
For $\mathbb P$-a.e. $\omega\in\Omega$, there exists a Borel set
$Q_\omega\subset\mathbb R$ of  capacity zero such that
$\mu_\omega(S_+\backslash Q_\omega)=0.$
\end{theorem}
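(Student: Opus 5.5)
The plan is to combine a spectral-averaging (Kotani/Simon–Wolff type) argument with a potential-theoretic criterion: a measure is carried by a set of zero capacity precisely when it gives zero mass to every measure of finite energy. More concretely, I will show that for a.e.\ $\omega$ the restriction $\mu_\omega|_{S_+}$ is mutually singular with every probability measure $\nu$ of finite logarithmic energy supported in a compact interval. Since the set of such $\nu$ is not countable, I will not argue measure by measure. Instead I will use the characterization that a Borel set $Q$ has zero capacity iff $\nu(Q)=0$ for all finite-energy $\nu$, and produce $Q_\omega$ explicitly as
\[
Q_\omega=\Bigl\{E\in S_+:\ \limsup_{n\to\infty}\tfrac1n\log\|M_n(E,\omega)\|<L_m(E)\ \text{or the Oseledets/Ruelle decay fails at both ends}\Bigr\}
\]
\emph{complemented} appropriately. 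In practice I take $Q_\omega$ to be the set of $E\in S_+$ at which there is a nonzero solution of $\mathcal H_\omega u=Eu$ that is polynomially bounded but does \emph{not} decay exponentially at both $\pm\infty$ at rate $L_m(E)$. Then I show two things. First, $\mu_\omega(S_+\setminus Q_\omega)=0$. This should follow from Schnol/generalized eigenfunction expansion: $\mu_\omega$-a.e.\ $E$ admits a polynomially bounded generalized eigenfunction, and on $S_+\setminus Q_\omega$ such an eigenfunction would be an $\ell^2$ eigenvector. Eigenvalues of $\mathcal H_\omega$ are countable, and each has $\mu_\omega$ mass only on a countable set, which has capacity zero, so I absorb it into $Q_\omega$. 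Second, $Q_\omega$ has capacity zero for a.e.\ $\omega$.

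For the second step, fix a finite-energy probability measure $\nu$. By Fubini it suffices to show that $\mathbb P(\omega: E\in Q_\omega)=0$ for $\nu$-a.e.\ $E$, and moreover to get this uniformly enough to pass to capacity. The key input is a large deviation/uniform Oseledets statement \emph{averaged over finite-energy measures}. The subharmonicity of $E\mapsto \frac1n\int\log\|\Lambda^k M_n(E,\omega)\|\,d\mathbb P$ (exterior powers, so that the sum $L_1+\dots+L_k$ is subharmonic in $E\in\mathbb C$ and harmonic off $\mathbb R$) together with the Thouless-type formula gives convergence of $\frac1n\log\|\Lambda^k M_n(E,\omega)\|$ to $L_1+\dots+L_k$ for \emph{quasi-every} $E$, i.e.\ off a polar set. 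This is the Brelot–Cartan type upper-envelope principle: for a decreasing/limsup sequence of subharmonic functions bounded above, $\limsup u_n=u$ off a polar set. Applying it to the pair $k=m$ and $k=m-1$ gives the gap $L_m$ and the Oseledets splitting for $(\omega,E)$ outside a set whose $E$-sections are polar for a.e.\ $\omega$. Here one uses the invariance $T$ together with the fact that both forward and backward cocycles are controlled. For such $(\omega,E)$ with $L_m(E)>0$, the standard Ruelle/Oseledets argument shows that every polynomially bounded solution decays at rate $L_m$ on both sides, i.e.\ $E\notin Q_\omega$. Hence $Q_\omega$ is contained in a polar set union a countable set, and thus has capacity zero.

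The main obstacle is the exceptional-set step. Cartan's theorem gives convergence of $\limsup$ quasi-everywhere only for a \emph{fixed} $\omega$ and a sequence of subharmonic functions of $E$. But the functions $\frac1n\log\|\Lambda^k M_n(E,\omega)\|$ converge to the averaged exponent only upper-semicontinuously (Craig–Simon gives $\limsup\le L$ everywhere), while the lower bound needed for decay holds a priori only Lebesgue/$\mathbb P$-a.e. I would handle this by showing that, for a.e.\ $\omega$, $\liminf_n \frac1n\log\|\Lambda^k M_n(E,\omega)\|\ge L_1+\dots+L_k$ for quasi-every $E$. The route is to use that the set where it fails is a countable union of sets where $\frac1n\log\|\Lambda^k M_n\|$ sits below the harmonic limit on the upper half plane, and to estimate their capacity through Fubini against arbitrary finite-energy $\nu$. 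Concretely, the Green-potential $\int\!\!\int \log|E-E'|^{-1}$ pairs with $\frac1n\log\|\cdot\|$, so $L^1(\mathbb P)$ convergence of $\int \frac1n\log\|\Lambda^kM_n\|\,d\nu$ follows from the Thouless formula and finite energy. This is where most of the work will lie.
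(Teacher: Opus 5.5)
Your first step is fine: Schnol's theorem, with the countable point spectrum absorbed into $Q_\omega$, gives $\mu_\omega(S_+\setminus Q_\omega)=0$. The gap is in the second step. To show that polynomially bounded solutions decay at both ends, you need genuine Oseledets behaviour for a \emph{fixed} $\omega$ and quasi-every $E$, namely $\liminf_n\frac1n\log\|\Lambda^kM_n(E,\omega)\|\ge L^k(E)$ off a polar set; only then can Ruelle's theorem be applied at $+\infty$ and $-\infty$. The upper envelope (Brelot--Cartan) theorem does not give this. At best it yields $\limsup_n\frac1n\log\|\Lambda^kM_n\|=L^k$ quasi-everywhere. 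Combining two such $\limsup$ statements for $k=m,m-1$ gives only $\limsup_n\frac1n\log s_m(M_n)\ge L_m$ along a subsequence, not a splitting. Whether the non-Lyapunov set $\{E:\liminf<\limsup=L\}$ is polar is open: Gorodetski--Kleptsyn show it can be a dense $G_\delta$, and Kleptsyn--Quintino's model results suggest it may have positive capacity. So the statement you plan to prove is at best unknown.

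Your proposed remedy, Fubini against a finite-energy $\nu$, has the quantifiers the wrong way round. For each $\nu$ it gives $\nu(Q_\omega)=0$ only for $\omega$ outside a $\nu$-dependent null set. Capacity zero cannot be tested on countably many measures, so this never produces a single $\omega$ with $\operatorname{cap}(Q_\omega)=0$. Indeed, for fixed $E$ the statement is just Oseledets for a.e.\ $\omega$; the whole difficulty is that the exceptional $\omega$'s depend on $E$.

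The missing idea is to prove \emph{non-existence} of polynomially bounded solutions, which needs only $\limsup$ information, rather than their decay. The paper's argument runs as follows.
\begin{enumerate}
\item Flip $\mathbb Z$ onto $\mathbb Z_+$ by $(Uu)_n=\binom{u_n}{u_{-n-1}}$. Then every whole-line solution is propagated by one $2m\times 2m$ Dirichlet matrix, $D_n(E,\omega)\binom{u_0}{u_{-1}}=\binom{u_n}{u_{-n-1}}$.
\item Use $\det(E-\mathcal H_{\omega,[-n,n-1]})=\prod_{j=0}^{n-1}\det\mathcal A_j(\omega)\cdot\det D_n(E,\omega)$. The symmetric-box counting measures converge to $\mathcal N$; this is one sequence for fixed $\omega$, so no Fubini is involved. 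The upper envelope theorem and the Thouless formula then give $\limsup_n\frac1n\log|\det D_n|=2\sum_{j\le m}L_j$ off a polar set.
\item Write $D_n$ through blocks of the forward and backward transfer matrices and apply Craig--Simon. This gives $\limsup_n\frac1n\log\|\wedge^{2m-1}D_n\|\le 2\sum_{j\le m}L_j-L_m$ for all $E$.
\item Hence $\limsup_n\frac1n\log s_{2m}(D_n)\ge L_m(E)>0$ quasi-everywhere on $S_+$. This forces $\|D_{n_j}v\|$ to grow exponentially for every $v\neq0$, contradicting Schnol.
\end{enumerate}
A posteriori this shows your $Q_\omega$ is polar, but your route cannot establish it.
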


Moreover, we have the following: 

\begin{corollary}
\label{cor:uniform}
If $(\Omega,\mathbb P,T)$ is uniquely ergodic and the Jacobi
coefficients $A(\cdot),B(\cdot)$ are continuous, then the conclusion of
Theorem~\ref{thm:main} holds for every $\omega\in\Omega$.
\end{corollary}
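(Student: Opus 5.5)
The plan is to rerun the proof of Theorem~\ref{thm:main} and observe that the only place where ``$\mathbb P$-almost every $\omega$'' enters is when we need every generalized eigenfunction to be controlled by the Lyapunov exponents, and to do this for all energies in $S_+$ simultaneously. In the uniquely ergodic, continuous setting we replace that input by uniform statements. Throughout, the spectral measure is still handled through Schnol/Berezanskii-type generalized eigenfunctions: for every $\omega$, $\mu_\omega$-a.e.\ $E$ admits a polynomially bounded nonzero solution $u$ of $\mathcal H_\omega u=Eu$. This part is deterministic. The capacity-zero set $Q_\omega$ will be the set of $E\in S_+$ carrying such a solution. Capacity zero is then shown exactly as in the proof of Theorem~\ref{thm:main}, via an equilibrium-measure/potential argument that combines the Thouless-type formula for $\sum_j L_j$ with large-deviation/subharmonicity bounds.

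The key steps, in order, are the following.
\begin{enumerate}
\item Under continuity of $A,B$ and unique ergodicity, the cocycle $(T,T_E)$ and its exterior powers $\wedge^k T_E$ are continuous. Since $\log\|A^{-1}\|$ is then bounded, the Furman-type uniform upper semicontinuity theorem applies: for every $\omega$,
\[
\limsup_{n\to\pm\infty}\frac1{|n|}\log\bigl\|\wedge^k T_{E,n}(\omega)\bigr\|\le L_1(E)+\cdots+L_k(E),
\]
uniformly on compact sets of $E$.
\item The integrated density of states, hence $\sum_{j\le m}L_j$ via the Thouless formula, exists as a limit for every $\omega$. This is the standard consequence of unique ergodicity together with continuity of the coefficients, because the relevant functions $\omega\mapsto\langle\delta_0,f(\mathcal H_\omega)\delta_0\rangle$ are continuous.
\item Wherever the proof of Theorem~\ref{thm:main} used Oseledets/Kingman limits at the single point $\omega$, substitute the uniform upper bounds from step~1 and the every-$\omega$ density-of-states convergence from step~2. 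Then check that only upper bounds and averaged (Thouless/capacity) identities were needed, never pointwise Oseledets lower bounds.
\end{enumerate}

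If the original argument does require lower growth, namely that a polynomially bounded solution at $E\in S_+$ forces an exceptional set, I would obtain it by contradiction. A bounded solution at energy $E$ makes $\frac1n\log\|\wedge^{m+1}T_{E,n}(\omega)\|$ fall strictly below $L_1+\cdots+L_m$ along a subsequence, on the relevant symplectic subspace. The set of such $E$ is then exhibited as a set where a sequence of subharmonic functions $\frac1n\log\|\cdot\|$ stays below its limit $\sum_j L_j$, a harmonic-minus-potential function in $E$. The Cartan/Brelot lemma (Hartogs-type: a $\limsup$ of subharmonic functions, uniformly bounded above, differs from its regularization only on a polar set) then gives capacity zero, and this argument is entirely deterministic in $\omega$.

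The main obstacle is step~3. One must verify that in the proof of Theorem~\ref{thm:main} every random ingredient is either an upper bound on growth or an identity in $E$ obtained by averaging, and hence holds for all $\omega$ under unique ergodicity. Typically exceptional sets in $\omega$ arise from applying Oseledets at each $E$, and one cannot take a union over uncountably many $E$. The first thing I would try is to push all $\omega$-dependence into the upper semicontinuous envelope
\[
\limsup_n\frac1n\log\|\wedge^m T_{E,n}(\omega)\|,
\]
which is bounded above by $\sum_{j\le m}L_j$ for every $\omega$ by step~1. Then, via Cartan's lemma, I would show that the set where its upper regularization differs from $\sum_{j\le m}L_j$ is polar.
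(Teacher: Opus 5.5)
Your main line is the paper's proof. Schnol's step is deterministic. Furman's uniform upper bound, applied to the continuous cocycles $T^+$ over $T$ and $T^-$ over $T^{-1}$, replaces the Craig--Simon bound in Lemma~\ref{lem:2}. Every-$\omega$ convergence of the finite-volume counting measures feeds the upper envelope theorem in Lemma~\ref{lem:1}. The paper's proof of the corollary is exactly the remark that Proposition~\ref{lem:singular} then holds for every $\omega$.

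There are two things to correct in your version. First, the convergence actually needed is for the Dirichlet restrictions to the symmetric boxes $[-n,n-1]$, since this is what the flip produces (Proposition~\ref{lem:IDS}). Second, your step~3 checklist misses a third $\omega$-dependent input. In Lemma~\ref{lem:1} one passes from $\det(E-\mathcal J_{\omega,[0,n-1]})$ to $\det D_n(E,\omega)$ by subtracting $\frac1n\sum_{j=0}^{n-1}\log|\det\mathcal A_j(\omega)|$. This is a Birkhoff sum of $\log|\det A|$ along both $T^j\omega$ and $T^{-j-2}\omega$, and its asymptotics at the given $\omega$ must be verified separately. It does hold for every $\omega$: $\log|\det A|$ is continuous (because $A$ is continuous and $\operatorname{GL}$-valued on compact $\Omega$), and $T^{\pm1}$ are uniquely ergodic.

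Your fallback should be discarded rather than kept in reserve. A polynomially bounded solution does not lower the growth of $\|\wedge^kT_{E,n}(\omega)\|$ for $k\le m$. For $k=m+1$, the rate $L^{m+1}=L^m-L_m$ is below $L^m$ whenever $L_m>0$, regardless of any solution, so the set you describe carries no information.

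Likewise, a quasi-everywhere $\limsup$ statement for $\frac1n\log\|\wedge^mT_{E,n}(\omega)\|$ is one-directional information. That is exactly the kind the paper explains is insufficient on the whole line. Moreover, Cartan's lemma would need $L^1_{\rm loc}$ convergence of these subharmonic functions at the fixed $\omega$, which Furman's upper bound does not provide.

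The paper's lower bound is deterministic for a different reason. The quantity $\frac1n\log|\det(E-\mathcal J_{\omega,[0,n-1]})|=2m\int\log|E-x|\,d\mathcal N^{(\mathcal J)}_{n,\omega}(x)$ is literally a logarithmic potential. The bound on $\|\wedge^{2m-1}D_n\|$ then converts determinant growth into growth of $s_{2m}(D_n)$, which controls all whole-line initial data at once.

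Finally, drop ``uniformly on compact sets of $E$'' in step~1. For fixed $n$, $\sup_\omega\frac1n\log\|\wedge^kT_{E,n}(\omega)\|$ is continuous in $E$ and at least $L^k(E)$. Such uniformity would therefore force $L^k$ to be continuous in $E$, which you have not established. Furman's bound at each fixed $E$ is all that is used.
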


\subsection{Previous results and whole-line difficulty}

For  scalar half-line  and whole-line quasi-periodic Schr\"odinger operators,
Jitomirskaya and Last \cite{JL99-Acta,JL00-CMP} used power-law
subordinacy theory to show that, in the positive-Lyapunov regime, the
spectral measure is carried by a set of zero Hausdorff dimension. In
the general ergodic setting, Simon \cite{Simon07-IPI} proved the
stronger zero capacity conclusion for scalar  \textbf{half-line}
Schr\"odinger operators: the restriction of the spectral measure to
the positive-Lyapunov region is carried by a Borel set of zero
capacity. This is sharper, as every set of zero
capacity has zero Hausdorff dimension \cite{Tsu}.
Goldsheid and Sodin \cite{GS24-IMRN} subsequently established the
corresponding \textbf{half-line} result for ergodic Schr\"odinger operators with
matrix-valued potentials. We also recall that the matrix-valued
extension of the Ishii--Pastur--Kotani theorem had previously been
developed by Kotani and Simon \cite{KS}.

Simon \cite{Simon07-IPI} also stated a whole-line zero capacity
analogue. There is, however, a subtle distinction between the
asymptotic information supplied by logarithmic potential theory and
that required by the subsequent whole-line argument. As Goldsheid and
Sodin \cite{GS24-IMRN} remarked in their discussion of Simon's
half-line theorem, the upper envelope theorem yields the
quasi-everywhere upper-limit asymptotics
\begin{equation}\label{eq:limsup-asymptotics}
\limsup_{n\to\infty}
\frac{1}{n}\log\|\Phi_n(E,\omega)\|
=
L(E),
\end{equation}
rather than, by itself, the existence of the full limit
\begin{equation}\label{eq:full-limit}
\lim_{n\to\infty}
\frac{1}{n}\log\|\Phi_n(E,\omega)\|
=
L(E),
\end{equation}
where $\Phi_n(E,\omega)$ is the transfer matrix.
The weaker statement \eqref{eq:limsup-asymptotics} is sufficient for
the half-line spectral-measure argument, and hence does not affect
the half-line conclusion. In the whole-line proof, by contrast,
the deterministic Ruelle--Oseledets step uses full asymptotic control
of two independent fundamental solutions. Consequently, the
upper-limit statement alone does not directly justify the whole-line
passage.

The distinction between \eqref{eq:limsup-asymptotics} and
\eqref{eq:full-limit} is not merely technical.
Jitomirskaya and Liu \cite{JL18} exhibited natural quasiperiodic
examples with non-regular transfer-matrix growth, for which, at
suitable phases,
\begin{equation*}
\liminf_{n\to\infty}
\frac{1}{n}\log\|\Phi_n(E,\omega)\|
<
\limsup_{n\to\infty}
\frac{1}{n}\log\|\Phi_n(E,\omega)\|
=
L(E).
\end{equation*}
In the random setting, Gorodetski and Kleptsyn
\cite{GK21-Advances} studied parameter-dependent products of
independent random matrices, including those arising from
one-dimensional Anderson models, and showed that non-Lyapunov
behaviour may occur on a dense $G_\delta$ set of parameters of zero
Hausdorff dimension. The capacity of the corresponding
exceptional set remains unknown. Motivated by this question, Kleptsyn
and Quintino \cite{KQ22-Potential} studied model
$G_\delta$ sets and established capacity phase transitions that
support the conjecture that quasi-everywhere convergence in
\eqref{eq:full-limit} may fail, or equivalently that the actual
non-Lyapunov exceptional set may have positive capacity.

The principal difficulty is the whole-line geometry. A half-line
boundary condition restricts the admissible initial data to a fixed
subspace, whereas a whole-line solution must be controlled
simultaneously at both $+\infty$ and $-\infty$. For each fixed energy,
the Oseledets theorem gives the relevant stable and unstable directions
for almost every realization, but the exceptional set of realizations
may depend on the energy. It therefore does not provide a fixed
realization for which the required dichotomy holds outside a set of
capacity zero. Moreover, the logarithmic-potential
argument used in Simon's whole-line approach yields only
quasi-everywhere $\limsup$ asymptotics, rather than the full limits
needed for the subsequent deterministic Ruelle--Oseledets argument.

\subsection{Main ideas of the proof}
The distinction between whole-line and half-line spectral measures is
genuine.  Indeed, Levi \cite{Levi} constructed a whole-line
Schr\"odinger operator whose spectral measure has Hausdorff dimension
one, while the spectral measures of all its half-line restrictions
have Hausdorff dimension zero.  Thus, whole-line spectral information
cannot, in general, be recovered by combining separate conclusions
for the two half-line restrictions.
To address this difficulty, inspired by an observation of Gonz\'alez
and Sadel \cite[Remarks~(g)]{GS}, we flip the negative half-line onto the
nonnegative half-line via the unitary map
$$
(Uu)_n=\binom{u_n}{u_{-n-1}},\qquad n\geq0.
$$
This identifies the whole-line block Jacobi operator
$\mathcal H_\omega$ with a half-line block Jacobi operator
$\mathcal J_\omega$ of block size $2m$.  Crucially, this procedure is
not a decoupling into two half-line restrictions: the coupling across
the origin is retained, and $\mathcal J_\omega$ is unitarily
equivalent to the original whole-line operator.

In contrast to the
constant-hopping random setting considered in \cite{GS}, we carry out
the construction for general ergodic block Jacobi operators and use it
to identify the corresponding finite-volume restrictions and maximal
spectral measures, although the flipped family is no longer ergodic.
The two-sided evolution is thereby encoded by a single Dirichlet
fundamental matrix $D_n(E,\omega)$, and control of all whole-line
initial data is reduced to estimating the smallest singular value of
$D_n(E,\omega)$. Although the flipped half-line family is no longer ergodic, its
finite-volume Dirichlet restrictions are unitarily equivalent to
symmetric finite-volume restrictions of the original whole-line
ergodic operator. This allows us to retain the finite-volume spectral
asymptotics required in the argument.

The remaining, genuinely matrix-valued difficulty is to prove that,
outside a set of capacity zero,
\begin{equation}\label{eq:min-singular-lower-intro}
\limsup_{n\to\infty}
\frac1n\log s_{2m}(D_n(E,\omega))
\geq L_m(E).
\end{equation}
We establish this estimate through the identity
$$
s_{2m}(D_n(E,\omega))
=
\frac{|\det D_n(E,\omega)|}
     {\|\wedge^{2m-1}D_n(E,\omega)\|}.
$$
The determinant is controlled through the finite-volume spectral
asymptotics inherited from the original ergodic operator, while the
exterior power is bounded by expressing $D_n(E,\omega)$ in terms of
the forward and backward transfer matrices and applying Craig--Simon
estimates \cite{CS83-Duke} to their exterior powers. The difference between the two
resulting volume-growth rates is precisely the smallest positive
Lyapunov exponent $L_m(E)$, yielding
\eqref{eq:min-singular-lower-intro}.

Finally, spectral-almost every energy admits a nonzero initial vector
whose corresponding Dirichlet solution grows at most polynomially.
When $L_m(E)>0$, this is incompatible with the subsequential
exponential expansion implied by
\eqref{eq:min-singular-lower-intro}. It follows that the spectral
measure in the positive-Lyapunov region is carried by the exceptional
set of zero capacity.

\subsection{Organization of the paper}

Section~\ref{sec:preliminaries} reviews the required facts about
ergodic block Jacobi operators and logarithmic
potential theory. 
Section~\ref{flip} develops the flipping construction and records
the resulting infinite-volume, finite-volume, and spectral-measure
identifications. In Section~\ref{sec:Dirichlet}, we combine determinant
asymptotics with exterior-power bounds to obtain a quasi-everywhere
lower bound on the smallest singular value of the flipped Dirichlet
fundamental matrix. This estimate, together with the Schnol theorem,
completes the proof of the main result.

\section{preliminaries}\label{sec:preliminaries}
\subsection{Cocycle}\label{cocycle}
Let $(\Omega,\mathbb P,T)$ be an invertible ergodic probability
dynamical system. Let
$A:\Omega\rightarrow \operatorname{GL}(m,\mathbb C)$
be a measurable map satisfying
\begin{equation*}
\log^+\|A(\cdot)\|\in L^1(\Omega,\mathbb P),
\quad
\log^+\|A(\cdot)^{-1}\|\in L^1(\Omega,\mathbb P).
\end{equation*}
A cocycle $(T, A)$ is a linear skew product:
$$
(T,A):\left\{
\begin{array}{rcl}
	\Omega\times \C^{m} &\to& \Omega \times \C^{m}\\
	(\omega,v) &\mapsto& (T\omega,A(\omega)v)
\end{array}
\right.  .
$$
For $n\in\mathbb{Z}$, $A_n$ is defined by $(T,A)^n=(T^n,A_n).$ Thus $A_{0}(\omega)=id$,
\begin{equation*}
	A_{n}(\omega)=\prod_{j=n-1}^{0}A(T^{j}\omega)=A(T^{n-1}\omega)\cdots A(T\omega)A(\omega),\,\, \mathrm{for}\,\ n\ge1,
\end{equation*}
and $A_{-n}(\omega)=A_{n}(T^{-n}\omega)^{-1}$.

We denote by $L_1(A)\geq L_2(A)\geq...\geq L_m(A)$ the Lyapunov exponents of $(T,A)$ repeated according to their multiplicities, i.e.,
$$
L_k(A)=\lim\limits_{n\rightarrow\infty}\frac{1}{n}\int_{\Omega}\ln(\sigma_k(A_n(\omega)))d \mathbb P(\omega),
$$
where for any matrix $B\in {\rm M_m}(\C)$, we denote by
$\sigma_1(B)\geq...\geq \sigma_m(B)$ its singular values (eigenvalues
of $\sqrt{B^*B}$).  Since the $k$-th exterior product $\Lambda^k A_n$ satisfies $\sigma_1(\Lambda^k A_n)=\|\Lambda^k A_n\|$, $L^k(A)=\sum_{j=1}^kL_j(A)$ satisfies
$$
L^k(A)=\lim\limits_{n\rightarrow \infty}\frac{1}{n}\int_{\Omega}\ln\|\Lambda^kA_n(\omega)\|d\mathbb P(\omega).
$$

\subsection{Density of states}
 Let $a,b\in\mathbb Z$ with $a\leq b$ and set
$\Lambda_{a,b}:=\{a,a+1,\ldots,b\}$. 
Let
\begin{equation*}
P_{a,b}:
\ell^2(\mathbb Z,\mathbb C^m)
\longrightarrow
\ell^2(\Lambda_{a,b},\mathbb C^m)
\end{equation*}
be the coordinate restriction, and define the finite-volume Dirichlet restriction by
$\mathcal H_{\omega,[a,b]}:=P_{a,b}\mathcal H_\omega P_{a,b}^*.$
Let
\begin{equation*}
\lambda_1^{[a,b]}(\omega),\ldots,\lambda_{m|\Lambda_{a,b}|}^{[a,b]}(\omega)
\end{equation*}
be the eigenvalues of $\mathcal H_{\omega,[a,b]}$, repeated according to their algebraic multiplicities. The normalized finite-volume eigenvalue counting measure is defined by
\begin{equation}\label{equ:IDS}
\mathcal N_{\omega,[a,b]}
:=
\frac{1}{m|\Lambda_{a,b}|}
\sum_{j=1}^{m|\Lambda_{a,b}|}
\delta_{\lambda_j^{[a,b]}(\omega)}.
\end{equation}
By the standard arguments \cite{Damanik-Fillman-book,Hof}, $\mathcal N_{\omega,[a,b]}\rightharpoonup \mathcal N$  for $\mathbb P$-almost every $\omega$.
Moreover, if $(\Omega,\mathbb P,T)$ is uniquely ergodic and $A(\cdot),B(\cdot)$ are continuous, $\mathcal N_{\omega,[a,b]}\rightharpoonup \mathcal N$ for every $\omega$.
And we have the following Thouless formula:
\begin{theorem}[Thouless formula]\cite{CS,Puig,KS}\label{thouless}
     Let $L_1(z)\geq L_2(z)\geq\cdots\geq L_m(z)$
denote the nonnegative Lyapunov exponents. Then, for every $z\in\mathbb C$,
\begin{equation*}
\frac{1}{m}\sum_{j=1}^mL_j(z)=-\frac{1}{m}\int_\Omega\log|\det A(\omega)|
d\mathbb P(\omega)+
\int_{\mathbb R}\log|z-E|
d\mathcal N(E).
\end{equation*}
\end{theorem}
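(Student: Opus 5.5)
The plan is to compare both sides as subharmonic functions of $z$. I would first establish the identity for $z\in\mathbb C\setminus\mathbb R$ by a finite-volume computation, and then extend it to real $z$ by upper semicontinuity and the sub-mean-value property.

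\textbf{Finite volume.} Fix $z$ and solve $\mathcal H_\omega u=zu$ on $[1,n]$ with Dirichlet data $u_0=0$, $u_1=I_m$, writing $u_{k+1}=\Phi_k(z,\omega)\binom{I_m}{0}$ (upper block, via the transfer matrices $T_z$). The $m\times m$ polynomial matrix $u_{n+1}(z)$ then satisfies
$$\det\bigl(z-\mathcal H_{\omega,[1,n]}\bigr)=\det\bigl(A_1\cdots A_n\bigr)\,\det u_{n+1}(z),$$
up to an index shift in the $A$'s. This follows from block Gaussian elimination: both sides are polynomials of degree $mn$ with the same zeros and the same leading coefficient. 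Dividing $\log|\cdot|$ by $mn$ and letting $n\to\infty$, the left side tends to $\int\log|z-E|\,d\mathcal N(E)$ for $z\notin\mathbb R$, by weak convergence of $\mathcal N_{\omega,[1,n]}$; the integrand is continuous and bounded on the compact support. The $\det A$ term tends to $\int\log|\det A|\,d\mathbb P$ by Birkhoff's theorem, using $\log|\det A|\in L^1$, which is where $\log^+\|A^{-1}\|\in L^1$ enters.

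\textbf{The main obstacle} is showing that $\frac1n\log|\det u_{n+1}(z)|\to L_1(z)+\cdots+L_m(z)$ for $z\notin\mathbb R$. Here $\det u_{n+1}$ is the Plücker coordinate $\langle \wedge^m\Phi_n\, e_1\wedge\cdots\wedge e_m,\ e_1\wedge\cdots\wedge e_m\rangle$.

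For the upper bound I use $|\det u_{n+1}|\le\|\wedge^m\Phi_n\|$ together with the subadditive ergodic theorem. Since $\Phi_n$ is conjugate to a symplectic-type product, its top $m$ exponents are the nonnegative ones, so the limit of $\frac1n\log\|\wedge^m\Phi_n\|$ is $\sum_{j\le m}L_j$.

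For the lower bound I use the Weyl disk and Herglotz structure off the real axis. For $\operatorname{Im}z\neq0$ there is dominated splitting: the Lagrangian subspace spanned by the Dirichlet data is uniformly transverse to the stable Oseledets subspace. This holds because Green's function $m$-functions exist with positive-definite imaginary parts, so vectors in the $e_1,\dots,e_m$ subspace have positive ``energy'' and cannot lie in the decaying subspace, whose $\ell^2$ solutions have the opposite sign of the Wronskian form. Oseledets' theorem then gives exponential growth rate $\sum_{j\le m}L_j$ for $\wedge^m\Phi_n$ applied to this vector. Projecting onto $e_1\wedge\cdots\wedge e_m$ loses only subexponentially, again by a transversality argument for the image, using the backward $m$-function.

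\textbf{Extension to real $z$.} Both sides are subharmonic in $z$: the left side as a limit of averages of $\frac1n\log\|\wedge^m\Phi_n\|$, which is a decreasing limit by subadditivity, hence upper semicontinuous; the right side as a log potential plus a constant. Two subharmonic functions on $\mathbb C$ that agree off $\mathbb R$, a set of planar measure zero, agree everywhere, since a subharmonic function equals the limit of its area averages at every point. This yields the formula for every $z\in\mathbb C$.
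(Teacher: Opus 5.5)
The paper gives no proof of this statement; it quotes it from \cite{CS,Puig,KS}. Your three steps follow the usual route behind those references: the finite-volume identity $\det(z-\mathcal H_{\omega,[1,n]})=\det(A_1\cdots A_n)\det u_{n+1}(z)$, identification of the growth of $\det u_{n+1}$ for $z\notin\mathbb R$, and extension to $\mathbb R$ because subharmonic functions that agree Lebesgue-a.e.\ agree everywhere. The structure is sound, but one load-bearing step is asserted rather than proved: the ``dominated splitting'' for $\operatorname{Im}z\neq0$, i.e.\ the gap $L_m(z)>0>L_{m+1}(z)$. Your Wronskian-sign argument shows only that the Dirichlet subspace $V_D=\{\binom{x}{0}\}$ meets the $m$-dimensional space of Weyl solutions ($\ell^2$ at $+\infty$) trivially. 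That space is the stable Oseledets space only if every Weyl solution decays \emph{exponentially}. If $L_m(z)=0$ were possible, the Oseledets space of negative exponents would have dimension less than $m$. Then the condition for $e_1\wedge\cdots\wedge e_m$ to realize the top exponent of $\wedge^m$ would no longer be transversality to the Weyl space, and your Oseledets step would not give $\frac1n\log\|\wedge^m\Phi_n(e_1\wedge\cdots\wedge e_m)\|\to L_1+\cdots+L_m$.

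The missing input is the quantitative form of the same Wronskian. For $\mathcal H_\omega u=zu$, put $Q_n=\operatorname{Im}\langle u_n,A_nu_{n+1}\rangle$; then $Q_n-Q_{n-1}=\operatorname{Im}z\,\|u_n\|^2$. Suppose $u_0=0$ and $\operatorname{Im}z>0$ (the other sign is symmetric). Then $\|A\|_\infty\|u_n\|\|u_{n+1}\|\ge Q_n=\operatorname{Im}z\,S_n$ with $S_n=\sum_{k=1}^n\|u_k\|^2$. This gives $S_{n+1}\ge(1+2c)S_{n-1}$ with $c=\operatorname{Im}z/\|A\|_\infty$. Hence every nonzero vector of $V_D$ grows forward at rate at least $\delta=\frac14\log(1+2c)$, and the same computation run to the left gives backward growth at rate $\delta$. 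By two-sided Oseledets, at least $m$ exponents are $\ge\delta$ and at least $m$ are $\le-\delta$. This is the gap, and it also shows $V_D$ is transverse to the stable space. With that supplied, your proof goes through.

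Your projection step also needs no limiting argument. Since $u_0=0$, you have $u_n=-G_{[1,n]}(n,n;z)A_nu_{n+1}$ with $\|G_{[1,n]}(n,n;z)\|\le|\operatorname{Im}z|^{-1}$. Therefore $|\det u_{n+1}|\le\|\wedge^m\Phi_n(e_1\wedge\cdots\wedge e_m)\|\le(1+\|A\|_\infty/|\operatorname{Im}z|)^m|\det u_{n+1}|$ for every $n$.

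Smaller points:
\begin{itemize}
\item Let the block elimination (Schur complements) carry the determinant identity, since matching zeros and leading coefficients does not control multiplicities.
\item Only the subsequence $n=2^k$ of the averages $\frac1n\int\log\|\wedge^m\Phi_n\|\,d\mathbb P$ is monotone. Each average is subharmonic because of the integrable majorant coming from $\log^+\|A^{-1}\|\in L^1$.
\item The relation $T^*JT=J$ holds only for real energies; for nonreal $z$ it pairs $z$ with $\bar z$. You do not need it: Furstenberg--Kesten gives the sum of the top $m$ exponents directly, and the argument above shows these are the positive ones.
\end{itemize}
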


\subsection{Potential theory}
We recall some basic notions from logarithmic potential theory.

\begin{definition}\label{cap}
    Let $\rho$ be a finite Borel measure on $\C$ with compact support. Its energy $I(\rho)$ is given by
\begin{equation*}\label{eq:energy}
I(\rho):=\int\int\log{|z-w|}d\rho(z)d\rho(w).
\end{equation*}
The  capacity of a subset $E$ of $\C$ is given by $\operatorname{cap}(E):=\sup_\rho e^{I(\rho)}$, where the supremum is taken over all Borel probability measure $\rho$ on $\C$ whose support is a compact subset of $E$.
\end{definition}

\begin{theorem}[Upper envelope theorem]\cite{Landkof}
\label{thm:upper-envelope}
Let $K\subset\mathbb C$ be compact, and let $\{\rho_n\}_{n\geq1}$ and $\rho$ be finite Borel measures supported on $K$. Suppose that  $\rho_n\rightharpoonup\rho$
weakly. Then, for every $z\in\mathbb C$,
\begin{equation*}
\limsup_{n\to\infty}\int\log |z-w|d\rho_n(w)\leq\int\log |z-w|d\rho(w).
\end{equation*}
Furthermore, there exists a set $\mathcal E\subset\mathbb C$ with
$\operatorname{cap}(\mathcal E)=0$ such that, for every
$z\in\mathbb C\setminus\mathcal E$,
\begin{equation*}
\limsup_{n\to\infty}\int_{\mathbb C}\log|z-w|\,d\rho_n(w)=\int_{\mathbb C}\log|z-w|\,d\rho(w).
\end{equation*}
\end{theorem}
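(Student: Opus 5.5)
The first statement follows from weak convergence and upper semicontinuity of the kernel. For the second, I would prove that the upper regularization of $u:=\limsup_n p_n$ equals the limit potential $p$, and then invoke the Brelot--Cartan theorem on negligible sets. Throughout, put $p_n(z)=\int\log|z-w|\,d\rho_n(w)$ and $p(z)=\int\log|z-w|\,d\rho(w)$. Weak convergence gives $\sup_n\rho_n(K)<\infty$. Since $K$ is compact, the $p_n$ are therefore subharmonic on $\mathbb C$ and locally uniformly bounded above.

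\emph{Step 1 (the inequality).} Fix $z$ and set $f_M(w)=\max(\log|z-w|,-M)$. This is continuous and bounded on $K$, and it decreases to $\log|z-w|$ as $M\to\infty$. Then
\[
\limsup_n p_n(z)\le \lim_n\int f_M\,d\rho_n=\int f_M\,d\rho .
\]
Letting $M\to\infty$ by monotone convergence, which is legitimate because $f_M$ is bounded above on $K$ and $\rho$ is finite, gives $\limsup_n p_n(z)\le p(z)$ for every $z$.

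\emph{Step 2 ($u^*=p$).} Let $u^*$ be the upper semicontinuous regularization of $u=\limsup_n p_n$. By Step 1, $u\le p$, and $p$ is upper semicontinuous, so $u^*\le p$.

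For the reverse inequality, fix $z$ and $r>0$, and let $\mathrm{Av}_r$ denote the normalized area average over the disk $D(z,r)$. Since $p$ is subharmonic,
\[
p(z)=\lim_{r\to0}\mathrm{Av}_r\,p .
\]
The function $w\mapsto\mathrm{Av}_r\log|\cdot-w|$ is continuous in $w$, because the logarithmic kernel is locally integrable. By Fubini and weak convergence, $\mathrm{Av}_r\,p_n\to \mathrm{Av}_r\,p$. The $p_n$ are uniformly bounded above on $D(z,r)$, so the reverse Fatou lemma gives
\[
\mathrm{Av}_r\,p=\lim_n \mathrm{Av}_r\,p_n\le \mathrm{Av}_r\,u\le \sup_{D(z,r)}u^*.
\]
Letting $r\to0$ and using upper semicontinuity of $u^*$ yields $p(z)\le u^*(z)$. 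Hence $u^*=p$ everywhere.

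\emph{Step 3 (exceptional set).} By the Brelot--Cartan theorem, for a sequence of subharmonic functions that is locally uniformly bounded above, the set $\{u<u^*\}$ is polar. By Step 2 this is exactly the set $\mathcal E=\{z:\limsup_n p_n(z)<p(z)\}$. Polar sets have (outer) logarithmic capacity zero. In particular, no probability measure with compact support inside $\mathcal E$ has finite energy, so $\operatorname{cap}(\mathcal E)=0$ in the sense of Definition~\ref{cap}. Off $\mathcal E$, Step 1 upgrades to equality.

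The main obstacle is Step 3, namely the input that $\{u<u^*\}$ is polar. This is a genuine potential-theoretic theorem, proved via the fine topology or Choquet's lemma together with Cartan's capacity estimates, and I would cite it from \cite{Landkof} rather than reprove it. Steps 1 and 2 are routine. The only care needed there is the uniform upper bound used in the reverse Fatou lemma and the continuity of the averaged kernel.
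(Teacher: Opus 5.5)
Your argument is correct. It does not follow the paper only because the paper gives no argument at all: it quotes the whole statement, the envelope theorem for logarithmic potentials, from \cite{Landkof}.

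You instead prove the everywhere inequality directly, by truncating the kernel and using weak convergence and monotone convergence. You then identify the upper semicontinuous regularization of $u=\limsup_n p_n$ with $p$ through area means and the reverse Fatou lemma. This reduces the quasi-everywhere equality to a single deep input, the Brelot--Cartan theorem on negligible sets.

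That theorem is usually stated for an upper envelope $\sup_\alpha u_\alpha$ rather than a $\limsup$, so the passage should be written out. Apply it to $v_k=\sup_{n\ge k}p_n$, so that $F=\bigcup_k\{v_k<v_k^*\}$ is polar. The functions $v_k^*$ decrease to an upper semicontinuous $v\ge u$, which is subharmonic (or $\equiv-\infty$), and $v=u$ off $F$. Since $F$ is Lebesgue-null, your area-mean argument gives $v=u^*=p$, hence $\{u<p\}\subset F$.

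What your route buys over the bare citation is an explicit exceptional set $\mathcal E=\{z:\limsup_n p_n(z)<p(z)\}$. This set is automatically Borel, because $u$ and $p$ are Borel functions. That is exactly what Lemma~\ref{lem:1} and Theorem~\ref{thm:main} use when they assert that $Q_\omega$ is a \emph{Borel} set of capacity zero. The statement as quoted only produces some set of zero (inner) capacity in the sense of Definition~\ref{cap}. The citation is shorter, but it hides the same fine-topology and capacity machinery that you also take as given in Step~3.
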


\section{Unitary flipping}\label{flip}

The construction below is inspired by González and Sadel
\cite[Remarks (g)]{GS}, who used the same geometric observation for
whole-line random Schr\"odinger operators with constant hopping. Here we
implement it for general block Jacobi operators and record the
finite-volume and spectral-measure identifications needed below. 

\begin{figure}[ht]
\centering
\begin{tikzpicture}[
  x=1.15cm,
  y=1.0cm,
  site/.style={
    circle,
    draw,
    thick,
    minimum size=7mm,
    inner sep=0pt
  },
  psite/.style={
    site,
    draw=blue!70!black,
    fill=blue!8
  },
  nsite/.style={
    site,
    draw=orange!80!black,
    fill=orange!12
  },
  bond/.style={thick},
  foldarrow/.style={
    -{Latex[length=2.5mm]},
    thick,
    red!70!black
  }
]

\node at (0,2.05) {\textbf{Whole-line}};

\node at (-4,1.25) {$\cdots$};
\node[nsite] (m3) at (-3,1.25) {$-3$};
\node[nsite] (m2) at (-2,1.25) {$-2$};
\node[nsite] (m1) at (-1,1.25) {$-1$};

\node[psite] (p0) at (0,1.25) {$0$};
\node[psite] (p1) at (1,1.25) {$1$};
\node[psite] (p2) at (2,1.25) {$2$};
\node at (3,1.25) {$\cdots$};

\draw[bond,orange!80!black]
  (-3.65,1.25)--(m3)--(m2)--(m1);

\draw[very thick,red!70!black]
  (m1)--(p0);

\draw[bond,blue!70!black]
  (p0)--(p1)--(p2)--(2.65,1.25);

\draw[dashed]
  (-0.5,0.82)--(-0.5,1.68);

\draw[foldarrow]
  (-2.85,0.9)
  .. controls (-2.75,0.15) and (-1.7,-0.05) ..
  (-1.1,-0.35);

\node[red!70!black] at (-2.55,0.15)
  {$\text{flip}$};

\node at (0,0.25)
  {\textbf{Half-line }};

\node[psite] (t0) at (-0.5,-0.55) {$0$};
\node[psite] (t1) at (1.0,-0.55) {$1$};
\node[psite] (t2) at (2.5,-0.55) {$2$};
\node at (3.7,-0.55) {$\cdots$};

\node[nsite] (b0) at (-0.5,-1.55) {$-1$};
\node[nsite] (b1) at (1.0,-1.55) {$-2$};
\node[nsite] (b2) at (2.5,-1.55) {$-3$};
\node at (3.7,-1.55) {$\cdots$};

\draw[bond,blue!70!black]
  (t0)--(t1)--(t2)--(3.35,-0.55);

\draw[bond,orange!80!black]
  (b0)--(b1)--(b2)--(3.35,-1.55);

\draw[very thick,red!70!black]
  (t0)--(b0);

\draw[rounded corners,thick]
  (-0.95,-0.13) rectangle (-0.05,-1.97);

\draw[rounded corners,thick]
  (0.55,-0.13) rectangle (1.45,-1.97);

\draw[rounded corners,thick]
  (2.05,-0.13) rectangle (2.95,-1.97);

\node at (-0.5,-2.3) {$n=0$};
\node at (1.0,-2.3) {$n=1$};
\node at (2.5,-2.3) {$n=2$};

\end{tikzpicture}

\caption{Flipping transformation}
\label{fig:folding}
\end{figure}

As illustrated in Figure~\ref{fig:folding}, we flip the negative
half-line onto the nonnegative half-line about the bond
$\{-1,0\}$. More precisely, define
$U:\ell^2(\mathbb Z,\mathbb C^m)
\longrightarrow
\ell^2(\mathbb Z_+,\mathbb C^{2m})$
by
\begin{equation*}
(Uu)_n
=
\binom{u_n}{u_{-n-1}},
\qquad n\geq0.
\end{equation*}
Thus, the $n$-th block of the flipped half-line consists of the two
sites $n$ and $-n-1$ of the original whole-line operator. 
The coefficients of the flipped operator can now be read directly from
this identification. For $n\geq0$, set
\begin{equation*}
\mathcal A_n(\omega)
=
\begin{pmatrix}
A_n(\omega)&0\\
0&A_{-n-2}(\omega)^*
\end{pmatrix}.
\end{equation*}
Away from the zeroth block, the two channels are uncoupled, and hence
\begin{equation*}
\mathcal B_n(\omega)
=
\begin{pmatrix}
B_n(\omega)&0\\
0&B_{-n-1}(\omega)
\end{pmatrix},
\qquad n\geq1.
\end{equation*}
At the zeroth block, the original hopping across the bond
$\{-1,0\}$ becomes an intra-block coupling. Therefore,
\begin{equation*}
\mathcal B_0(\omega)
=
\begin{pmatrix}
B_0(\omega)&A_{-1}(\omega)^*\\
A_{-1}(\omega)&B_{-1}(\omega)
\end{pmatrix}.
\end{equation*}

We define the flipped operator
$\mathcal J_\omega$ on
$\ell^2(\mathbb Z_+,\mathbb C^{2m})$ by
\begin{equation*}
(\mathcal J_\omega y)_n=\mathcal A_{n-1}(\omega)^*y_{n-1}
+\mathcal B_n(\omega)y_n
+\mathcal A_n(\omega)y_{n+1},
\qquad n\geq0,
\end{equation*}
with the Dirichlet boundary condition $y_{-1}=0$.
The operator $\mathcal J_\omega$ is thus a half-line block Jacobi
operator of block size $2m$. Notice that, although
$\mathcal H_\omega$ belongs to an ergodic family, the flipped family
$\{\mathcal J_\omega\}_{\omega\in\Omega}$ is generally not ergodic.

We shall also need the corresponding finite-volume identification.
For $N\geq1$, let
$\mathcal H_{\omega,[-N,N-1]}$ denote the Dirichlet restriction of
$\mathcal H_\omega$ to $[-N,N-1]$, with boundary conditions
$u_{-N-1}=u_N=0,$
and let $\mathcal J_{\omega,[0,N-1]}$ denote the Dirichlet
restriction of $\mathcal J_\omega$ to $[0,N-1]$, with boundary
conditions 
$y_{-1}=y_N=0.$
Define
$$
U^{(N)}:
\ell^2([-N,N-1],\mathbb C^m)
\longrightarrow
\ell^2([0,N-1],\mathbb C^{2m})
$$
by
\begin{equation*}
(U^{(N)}u)_k=
\binom{u_k}{u_{-k-1}},
\qquad 0\leq k\leq N-1.
\end{equation*}
The maps $U$ and $U^{(N)}$ provide the infinite- and
finite-volume unitary equivalences recorded in the following lemma.

\begin{lemma}\label{lem:unitary}
The flipping map $U$ is unitary and 
$\mathcal J_\omega
=
U\mathcal H_\omega U^{-1}.$
Moreover, for every $N\geq1$, $U^{(N)}$ is unitary and
$$
\mathcal J_{\omega,[0,N-1]}
=
U^{(N)}
\mathcal H_{\omega,[-N,N-1]}
\bigl(U^{(N)}\bigr)^{-1}.
$$
\end{lemma}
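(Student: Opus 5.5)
The plan is to check unitarity first and then compare the two operators block by block; all of this is direct bookkeeping.

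\emph{Unitarity.} The map $n\mapsto(n,-n-1)$ is a bijection from $\mathbb Z_+$ onto the pairs that partition $\mathbb Z$ into $\mathbb Z_+$ and its complement. Hence
\[
\|Uu\|^2=\sum_{n\ge0}\bigl(|u_n|^2+|u_{-n-1}|^2\bigr)=\|u\|^2 ,
\]
and $U$ is surjective, with inverse $(U^{-1}y)_n=(y_n)_{\mathrm{top}}$ for $n\ge0$ and $(U^{-1}y)_{-n-1}=(y_n)_{\mathrm{bot}}$. The same argument applies to $U^{(N)}$, since $k\mapsto(k,-k-1)$ pairs $[0,N-1]$ with $[-N,-1]$ and together they exhaust $[-N,N-1]$.

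\emph{Intertwining.} I will verify $U\mathcal H_\omega=\mathcal J_\omega U$ on finitely supported vectors; both sides are bounded, so this suffices. Set $y=Uu$ and compute $(\mathcal J_\omega y)_n$ component by component.

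For $n\ge1$, the top component equals
\[
A_{n-1}^*u_{n-1}+B_nu_n+A_nu_{n+1}=(\mathcal H_\omega u)_n .
\]
The bottom component equals
\[
A_{-n-1}\,u_{-n}+B_{-n-1}u_{-n-1}+A_{-n-2}^*\,u_{-n-2}.
\]
Here the first term comes from $(\mathcal A_{n-1})^*_{22}=(A_{-n-1}^*)^*$ and the last from $(\mathcal A_n)_{22}=A_{-n-2}^*$. This is exactly $(\mathcal H_\omega u)_{-n-1}$, because the site $-n-1$ has left neighbour $-n-2$ with coefficient $A_{-n-2}^*$ and right neighbour $-n$ with coefficient $A_{-n-1}$.

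For $n=0$, the boundary term $y_{-1}=0$ drops out and the coupling enters through $\mathcal B_0$. The top component is
\[
B_0u_0+A_{-1}^*u_{-1}+A_0u_1=(\mathcal H_\omega u)_0 .
\]
The bottom component is
\[
A_{-1}u_0+B_{-1}u_{-1}+A_{-2}^*u_{-2}=(\mathcal H_\omega u)_{-1}.
\]

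\emph{Finite volume.} For the restriction to $[0,N-1]$, the condition $y_N=0$ corresponds exactly to $u_N=0$ and $u_{-N-1}=0$. For $k\le N-2$ the blockwise identities above hold verbatim. At $k=N-1$ the truncation removes precisely the terms involving $u_N$ and $u_{-N-1}$, on both sides. Equivalently, one can write $P_{0,N-1}U=U^{(N)}P_{-N,N-1}$ and $P_{0,N-1}^*$ compatibly, and conjugate the infinite-volume identity.

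The only real obstacle is keeping the indices of the lower channel correct, since it runs in reverse orientation. In particular the adjoint placement in $\mathcal A_n$ must be checked: the entry is $A_{-n-2}^*$, so the hopping from block $n$ to block $n+1$ in the lower channel is $A_{-n-2}^*$. I would verify this index bookkeeping once carefully and then let every case follow from it.
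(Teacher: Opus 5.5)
Your proof is correct and follows the same route as the paper. Unitarity holds because $U$ and $U^{(N)}$ merely regroup coordinates, and the intertwining $\mathcal J_\omega U=U\mathcal H_\omega$ is checked by showing that the top and bottom components of $(\mathcal J_\omega Uu)_n$ are $(\mathcal H_\omega u)_n$ and $(\mathcal H_\omega u)_{-n-1}$, with the finite-volume case coming from the same computation at the truncated boundary. Your explicit index checks (the $n=0$ block via $\mathcal B_0$, the reversed lower channel, and the observation that $y_N=0$ is equivalent to $u_N=u_{-N-1}=0$) simply fill in details the paper leaves to the reader.
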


\begin{proof}
Both maps merely rearrange the coordinates, which is easily check to be unitary. 
If $y=Uu$, then the definitions above give
$$
(\mathcal J_\omega y)_n
=
\binom{(\mathcal H_\omega u)_n}
      {(\mathcal H_\omega u)_{-n-1}},
\qquad n\geq0.
$$
Hence $\mathcal J_\omega U=U\mathcal H_\omega$. The finite-volume
identity follows from the same computation, since
$u_{-N-1}=u_N=0$ corresponds exactly to $y_{-1}=y_N=0$.
\end{proof}

The unitary equivalence above also identifies the natural spectral
measure of $\mathcal H_\omega$ associated with the two adjacent sites
$-1$ and $0$ with the trace spectral measure of
$\mathcal J_\omega$ at its zeroth block.

Let
$\bigl\{\delta_{i,n}:1\leq i\leq m,\ n\in\mathbb Z\bigr\}$
and $
\bigl\{\widetilde\delta_{j,n}:1\leq j\leq 2m,\
n\in\mathbb Z_+\bigr\}$
denote the standard orthonormal bases of
$\ell^2(\mathbb Z,\mathbb C^m)$ and
$\ell^2(\mathbb Z_+,\mathbb C^{2m})$, respectively. For every Borel
set $\Delta\subset\mathbb R$, define
\begin{equation*}
\mu_\omega(\Delta):=\sum_{i=1}^m\left\langle\delta_{i,0},\chi_\Delta(\mathcal H_\omega)\delta_{i,0}\right\rangle +
\sum_{i=1}^m\left\langle\delta_{i,-1},\chi_\Delta(\mathcal H_\omega)\delta_{i,-1}\right\rangle
\end{equation*}
and
\begin{equation*}
\tau_\omega(\Delta):=\sum_{j=1}^{2m}\left\langle\widetilde\delta_{j,0},\chi_\Delta(\mathcal J_\omega)\widetilde\delta_{j,0}
\right\rangle .
\end{equation*}
Since the hopping matrices are invertible, the vectors supported at
the sites $-1$ and $0$ form a cyclic family for
$\mathcal H_\omega$, while the vectors supported at the zeroth block
form a cyclic family for $\mathcal J_\omega$. Consequently,
$\mu_\omega$ and $\tau_\omega$ are maximal spectral measures of
$\mathcal H_\omega$ and $\mathcal J_\omega$, respectively. As a consequence of unitary equivalance, we have the following: 

\begin{lemma}\label{lem:measure}
For every Borel set $\Delta\subset\mathbb R$, 
$\mu_\omega(\Delta)=\tau_\omega(\Delta).$
\end{lemma}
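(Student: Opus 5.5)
The plan is to transport spectral projections through the unitary $U$ of Lemma~\ref{lem:unitary} and to check that $U$ maps the basis vectors at sites $0$ and $-1$ onto the basis vectors of the zeroth block. By functional calculus, the identity $\mathcal J_\omega=U\mathcal H_\omega U^{-1}$ gives
\[
\chi_\Delta(\mathcal J_\omega)=U\chi_\Delta(\mathcal H_\omega)U^{-1}
\]
for every Borel set $\Delta\subset\mathbb R$. To see this, first note that $f(\mathcal J_\omega)=Uf(\mathcal H_\omega)U^{-1}$ holds for polynomials. It extends to bounded continuous $f$ by uniform approximation on the spectrum, which is the same compact set for both operators since they are unitarily equivalent and bounded. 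It then extends to bounded Borel $f$ by monotone class or weak limits. Alternatively, one can observe that $E\mapsto U\chi_{(\cdot)}(\mathcal H_\omega)U^{-1}$ is a projection-valued measure whose integral of $E$ is $\mathcal J_\omega$, and invoke uniqueness in the spectral theorem.

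The next step is to identify the basis vectors. From the definition $(Uu)_n=\binom{u_n}{u_{-n-1}}$, we get $U\delta_{i,0}=\widetilde\delta_{i,0}$ for $1\le i\le m$, because the site $0$ sits in the upper component of block $0$. Similarly $U\delta_{i,-1}=\widetilde\delta_{m+i,0}$, because $-1=-0-1$ sits in the lower component of block $0$. As $i$ ranges over $1,\dots,m$, these images exhaust $\{\widetilde\delta_{j,0}\}_{j=1}^{2m}$.

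Finally, since $U$ is unitary, $\langle U\phi,\chi_\Delta(\mathcal J_\omega)U\phi\rangle=\langle\phi,\chi_\Delta(\mathcal H_\omega)\phi\rangle$. Applying this to each $\delta_{i,0}$ and $\delta_{i,-1}$ and summing over $i$ gives $\tau_\omega(\Delta)=\mu_\omega(\Delta)$. There is no real obstacle. The only point requiring care is the transfer of Borel functional calculus under unitary equivalence, which is standard, and the correct matching of indices $m+i\leftrightarrow(i,-1)$.
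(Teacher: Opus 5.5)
Your proposal is correct and follows the same route as the paper. Both transport $\chi_\Delta$ through the unitary $U$ of Lemma~\ref{lem:unitary}, check that $U\delta_{i,0}=\widetilde\delta_{i,0}$ and $U\delta_{i,-1}=\widetilde\delta_{m+i,0}$, and sum the diagonal matrix elements; your extra justification of the Borel functional calculus under unitary equivalence simply spells out what the paper invokes in one line.
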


\begin{proof}
By Lemma~\ref{lem:unitary} and the functional calculus, 
$\chi_\Delta(\mathcal J_\omega)
=
U\chi_\Delta(\mathcal H_\omega)U^{-1}.$
Moreover, the definition of $U$ gives $
U\delta_{i,0}=\widetilde\delta_{i,0},$ $U\delta_{i,-1}=\widetilde\delta_{m+i,0},\, 1\leq i\leq m.$
Therefore,
$$
\mu_\omega(\Delta)=\sum_{i=1}^m\left\langle\widetilde\delta_{i,0},
\chi_\Delta(\mathcal J_\omega)\widetilde\delta_{i,0}\right\rangle +
\sum_{i=1}^m\left\langle\widetilde\delta_{m+i,0},\chi_\Delta(\mathcal J_\omega)\widetilde\delta_{m+i,0}\right\rangle
=\tau_\omega(\Delta).
$$
\end{proof}

\section{Growth of the flipped Dirichlet fundamental matrix}\label{sec:Dirichlet}

\subsection{Transfer matrices and Dirichlet solutions}

In the preceding section, the flipping map identifies the whole-line operator
$\mathcal H_\omega$ with the half-line block Jacobi operator $\mathcal J_\omega$ on
$\ell^2(\mathbb Z_+,\mathbb C^{2m})$. 

We first introduce the transfer matrix associated with the eigenvalue
equation
$\mathcal J_\omega y=Ey.$
For $n\geq0$, define the one-step transfer matrix
\begin{equation*}
\mathscr M_n(E,\omega):=
\begin{pmatrix}
\mathcal A_n(\omega)^{-1}\bigl(EI_{2m}-\mathcal B_n(\omega)\bigr)
&-\mathcal A_n(\omega)^{-1}\mathcal A_{n-1}(\omega)^*\\
I_{2m}&0
\end{pmatrix},
\end{equation*}
with the convention $\mathcal A_{-1}=I_{2m}$. Note that $\mathcal A_n(\omega)$  depends on both $T^n \omega$ and  $T^{-n-2}\omega$, so the sequence $\mathscr M_n(E,\omega)$   is not generated by an ergodic cocycle over $(\Omega,\mathbb P,T)$.
For $n\geq1$, define the half-line transfer matrix by
\begin{equation*}
\mathscr T(n;E,\omega):=\mathscr M_{n-1}(E,\omega)\cdots
\mathscr M_1(E,\omega)
\mathscr M_0(E,\omega).
\end{equation*}
Let $D_n(E,\omega)$ be the Dirichlet fundamental matrix associated
with $\mathcal J_\omega$, determined by
$
D_{-1}(E,\omega)=0,
D_0(E,\omega)=I_{2m},$
and
$$
\mathcal A_n(\omega)D_{n+1}(E,\omega)
=
\bigl(EI_{2m}-\mathcal B_n(\omega)\bigr)D_n(E,\omega)
-\mathcal A_{n-1}(\omega)^*D_{n-1}(E,\omega).
$$
With $\mathcal A_{-1}=I_{2m}$, this is equivalently
expressed as
\begin{equation*}
\binom{D_n(E,\omega)}{D_{n-1}(E,\omega)}
=
\mathscr T(n;E,\omega)
\binom{I_{2m}}{0},
\qquad
n\geq1.
\end{equation*}
Therefore, $D_n(E,\omega)$ is precisely the upper-left block of the
$n$-step transfer matrix of the half-line operator.

Although $D_n(E,\omega)$ is the upper-left block of the half-line transfer
matrix, the sequence $\{\mathscr M_n(E,\omega)\}_{n\geq0}$ is not generated
by an ergodic cocycle over the original system $(\Omega,\mathbb P,T)$. 
The absence of an underlying ergodic cocycle prevents
a direct application of the standard ergodic theorems to its asymptotic
growth. However, its finite-volume spectral statistics are still governed by
the density of states of the original whole-line operator. We record this
observation together with the Thouless formula.

\begin{proposition}\label{lem:IDS}
Let $\lambda_{n,\omega,1},\ldots,\lambda_{n,\omega,2mn}$
be the eigenvalues of $\mathcal J_{\omega,[0,n-1]}$ and define the normalized eigenvalue counting measure by
\begin{equation*}
\mathcal N_{n,\omega}^{(\mathcal J)}:=\frac{1}{2mn}\sum_{j=1}^{2mn}\delta_{\lambda_{n,\omega,j}}.
\end{equation*}
Let $\mathcal{N}$ be the density of states measure of the whole-line operator $\mathcal H_\omega$.
Then, for $\mathbb P$-a.e. $\omega$, $
\mathcal N_{n,\omega}^{(\mathcal J)}
\rightharpoonup
\mathcal N$
weakly as $n\to\infty$.

In addition, if $(\Omega,\mathbb P,T)$ is uniquely ergodic and
$A(\cdot)$ and $B(\cdot)$ are continuous, then
$
\mathcal N_{n,\omega}^{(\mathcal J)}
\rightharpoonup
\mathcal N$ holds for every $\omega\in\Omega$.
\end{proposition}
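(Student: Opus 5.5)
By Lemma~\ref{lem:unitary}, $\mathcal J_{\omega,[0,n-1]}$ is unitarily equivalent to $\mathcal H_{\omega,[-n,n-1]}$, so the two operators have the same eigenvalues with multiplicities. Since $|\Lambda_{-n,n-1}|=2n$, we get
\begin{equation*}
\mathcal N_{n,\omega}^{(\mathcal J)}=\mathcal N_{\omega,[-n,n-1]},
\end{equation*}
where the right-hand side is the symmetric-box counting measure from \eqref{equ:IDS}. The proposition is therefore a statement about the whole-line ergodic operator along the particular sequence of boxes $[-n,n-1]$. The plan is to invoke the convergence $\mathcal N_{\omega,[a,b]}\rightharpoonup\mathcal N$ recalled in the preliminaries, but to check that it really covers two-sided boxes with both endpoints moving.

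To make this rigorous I would argue via moments. All measures involved are supported in the fixed compact interval $[-R,R]$ with $R=2\|A\|_\infty+\|B\|_\infty$. By Weierstrass it therefore suffices to prove convergence of $\int E^k\,d\mathcal N_{\omega,[-n,n-1]}$ for every $k\ge0$. Write
\begin{equation*}
\int E^k\,d\mathcal N_{\omega,[-n,n-1]}=\frac{1}{2mn}\operatorname{tr}\bigl(\mathcal H_{\omega,[-n,n-1]}\bigr)^k .
\end{equation*}
The operator $\mathcal H_\omega$ has range one in blocks, so $\langle\delta_{i,j},\mathcal H_{\omega,[-n,n-1]}^k\delta_{i,j}\rangle=\langle\delta_{i,j},\mathcal H_\omega^k\delta_{i,j}\rangle$ whenever $j$ lies at distance more than $k$ from the box boundary. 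The boundary sites contribute at most $4mkR^k$ in total. Hence
\begin{equation*}
\frac{1}{2mn}\operatorname{tr}\bigl(\mathcal H_{\omega,[-n,n-1]}\bigr)^k
=\frac{1}{2n}\sum_{j=-n}^{n-1}f_k(T^j\omega)+O\bigl(k R^k/n\bigr),
\qquad
f_k(\omega):=\frac1m\sum_{i=1}^m\langle\delta_{i,0},\mathcal H_\omega^k\delta_{i,0}\rangle .
\end{equation*}
This uses the covariance $\langle\delta_{i,j},\mathcal H_\omega^k\delta_{i,j}\rangle=\langle\delta_{i,0},\mathcal H_{T^j\omega}^k\delta_{i,0}\rangle$.

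The average $\frac{1}{2n}\sum_{j=-n}^{n-1}$ splits as one half of a forward Birkhoff average of $f_k$ under $T$ plus one half of a Birkhoff average under $T^{-1}$ (the latter is $\frac1n\sum_{j=1}^{n}f_k(T^{-j}\omega)$). Both converge $\mathbb P$-a.e.\ to $\int f_k\,d\mathbb P$ by the Birkhoff theorem, since $T^{-1}$ is also ergodic and $f_k$ is bounded. Intersecting over the countably many $k$ gives a full-measure set of $\omega$. On that set every moment converges to $\int f_k\,d\mathbb P=\int E^k\,d\mathcal N$, which is the standard definition of the density of states measure, and weak convergence follows.

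For the uniquely ergodic case with continuous $A,B$, each $f_k$ is continuous, because it is a polynomial in the entries of $A(T^j\omega)$, $A(T^j\omega)^*$ and $B(T^j\omega)$ for $|j|\le k$. Birkhoff averages of continuous functions then converge uniformly for every $\omega$, for both $T$ and $T^{-1}$. The main point needing care is the two-sided nature of the box. I handle it by splitting into two one-sided ergodic averages rather than relying on a one-sided statement with a shifted starting point. Once this is done the remaining steps are routine.
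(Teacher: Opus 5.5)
Your proposal is correct and follows the paper's proof. Like you, the paper uses Lemma~\ref{lem:unitary} to get $\mathcal N_{n,\omega}^{(\mathcal J)}=\mathcal N_{\omega,[-n,n-1]}$, but it then simply cites the standard finite-volume approximation of the density of states \cite{Damanik-Fillman-book,Hof}. Your moment computation with the forward/backward Birkhoff split is a valid, self-contained check of that cited step for two-sided boxes; in the uniquely ergodic case, take $R$ with $\sup_\Omega$ norms rather than essential suprema, since every $\omega$ must be covered.
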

\begin{proof}
By Lemma~\ref{lem:unitary}, $
\mathcal J_{\omega,[0,n-1]}
=
U^{(n)}
\mathcal H_{\omega,[-n,n-1]}
\bigl(U^{(n)}\bigr)^{-1}.$
Hence the two finite-volume restrictions have the same eigenvalues,
counted with multiplicity, and therefore
$\mathcal N_{n,\omega}^{(\mathcal J)}
=
\mathcal N_{\omega,[-n,n-1]},$ where $\mathcal N_{\omega,[-n,n-1]}$ is defined in \eqref{equ:IDS}.
The asserted convergence now follows from the standard finite-volume
approximation of the density of states
\cite{Damanik-Fillman-book,Hof}. In the uniquely ergodic
setting, the corresponding convergence holds for every
$\omega\in\Omega$.
\end{proof}

\subsection{Determinant asymptotics}

Proposition \ref{lem:IDS} shows that, despite the loss of ergodicity after the
flip, the finite-volume eigenvalue counting measures of the half-line
operator converge to the density of states of the original whole-line
family. This allows us to control the determinant of the Dirichlet
fundamental matrix through logarithmic potential theory and the Thouless
formula.

\begin{lemma}\label{lem:1}
For $\mathbb P$-a.e. $\omega$, there exists a Borel set
$Q_{\omega}\subset\mathbb R$ with capacity zero such that for any $E\notin Q_{\omega}$, 
\begin{equation*}\label{singular-value-(3)}
\limsup_{n\to\infty}
\frac1n\log|\det D_n(E,\omega)|=
2\sum_{j=1}^m L_j(E).
\end{equation*}
Moreover, if $(\Omega,\mathbb P,T)$ is uniquely ergodic and $A(\cdot),B(\cdot)$ are continuous, then the above conclusion holds for every $\omega$. 
\end{lemma}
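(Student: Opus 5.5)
The plan is to write $\det D_n(E,\omega)$ as a characteristic polynomial of the finite-volume operator $\mathcal J_{\omega,[0,n-1]}$. Then Proposition~\ref{lem:IDS}, the upper envelope theorem (Theorem~\ref{thm:upper-envelope}) and the Thouless formula (Theorem~\ref{thouless}) together give the $\limsup$ asymptotics off a capacity-zero set.

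\emph{Step 1: a determinant identity.} Writing the recursion for $D_k$ as a block tridiagonal system, the vector $(D_0,\dots,D_{n-1})$ solves $(\mathcal J_{\omega,[0,n-1]}-E)(D_0,\dots,D_{n-1})^T=(0,\dots,0,-\mathcal A_{n-1}D_n)^T$. The standard argument for block Jacobi matrices (expand, or compare degrees and zeros in $E$; $D_n$ is a matrix polynomial of degree $n$ with leading coefficient $(\mathcal A_{n-1}\cdots\mathcal A_0)^{-1}$) then gives
\[
\det D_n(E,\omega)=\prod_{k=0}^{n-1}\det\mathcal A_k(\omega)^{-1}\,\det\bigl(E-\mathcal J_{\omega,[0,n-1]}\bigr),
\]
using $\mathcal A_{-1}=I_{2m}$. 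I would verify this by induction on $n$ via Schur complements, which is routine. Consequently
\[
\frac1n\log|\det D_n(E,\omega)|=2m\int\log|E-x|\,d\mathcal N^{(\mathcal J)}_{n,\omega}(x)-\frac1n\sum_{k=0}^{n-1}\log|\det\mathcal A_k(\omega)|.
\]

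\emph{Step 2: the hopping term.} Since $\det\mathcal A_k=\det A(T^k\omega)\,\overline{\det A(T^{-k-2}\omega)}$, the average splits into a forward and a backward Birkhoff sum of $\log|\det A|$. This function is integrable because $\|A\|_{L^\infty}<\infty$ and $\log^+\|A^{-1}\|\in L^1$. By the ergodic theorem applied to $T$ and $T^{-1}$, for a.e.\ $\omega$ the average converges to $2\int_\Omega\log|\det A|\,d\mathbb P$. In the uniquely ergodic continuous case, $\log|\det A|$ is continuous, since $A$ is continuous into $\operatorname{GL}(m)$ on a compact space. The convergence is then uniform, hence valid for every $\omega$.

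\emph{Step 3: potential theory.} All the measures $\mathcal N^{(\mathcal J)}_{n,\omega}$ are supported in a fixed compact set, because $\|\mathcal J_{\omega,[0,n-1]}\|\le 2\|A\|_\infty+\|B\|_\infty$. By Proposition~\ref{lem:IDS} they converge weakly to $\mathcal N$ for a.e.\ $\omega$ (for every $\omega$ in the uniquely ergodic case). Applying Theorem~\ref{thm:upper-envelope} to this sequence for each fixed good $\omega$ gives a set $\mathcal E_\omega$ of capacity zero such that, for $E\notin\mathcal E_\omega$, the $\limsup$ of the potentials equals $\int\log|E-x|\,d\mathcal N$. Set $Q_\omega=\mathcal E_\omega\cap\mathbb R$. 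Capacity is monotone, so $\operatorname{cap}(Q_\omega)=0$. If the theorem does not already provide a Borel $\mathcal E_\omega$, I would replace it by a $G_\delta$ superset of zero outer capacity, which is possible since Borel sets are capacitable. Combining with Step 2 (a convergent term does not disturb the $\limsup$) gives
\[
\limsup_{n\to\infty}\frac1n\log|\det D_n|=2m\int\log|E-x|\,d\mathcal N-2\int_\Omega\log|\det A|\,d\mathbb P=2\sum_{j=1}^mL_j(E)
\]
by the Thouless formula multiplied by $2m$.

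\emph{Main obstacle.} The main obstacle is Step 1: getting the determinant identity exactly right, with the boundary convention $\mathcal A_{-1}=I_{2m}$ and the correct product of $\det\mathcal A_k$. A second, minor point is the Borel measurability of $Q_\omega$.
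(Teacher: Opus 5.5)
Your proposal is correct and follows the paper's proof essentially step by step: the determinant identity $\det(E-\mathcal J_{\omega,[0,n-1]})=\prod_{j=0}^{n-1}\det\mathcal A_j(\omega)\,\det D_n(E,\omega)$, forward and backward Birkhoff averages of $\log|\det\mathcal A_j|$, the upper envelope theorem applied to $\mathcal N^{(\mathcal J)}_{n,\omega}\rightharpoonup\mathcal N$, and the Thouless formula multiplied by $2m$. You add details the paper leaves implicit: the uniform compact support of the counting measures, the integrability of $\log|\det A|$, and Borel measurability, which is in fact automatic since the exceptional set is defined through a $\limsup$ of upper semicontinuous potentials. Your only slip is cosmetic and does not affect the determinant: the leading coefficient of $D_n$ is $(\mathcal A_0\cdots\mathcal A_{n-1})^{-1}$ rather than $(\mathcal A_{n-1}\cdots\mathcal A_0)^{-1}$.
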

\begin{proof}
By the definition,
$$
\frac1n\log|\det(E I-\mathcal J_{\omega,[0,n-1]})|=2m\int_{\mathbb R}\log|E-x|d\mathcal N_{n,\omega}^{(\mathcal J)}(x).
$$
By Proposition \ref{lem:IDS}, for $\mathbb P$-a.e. $\omega$, $
\mathcal N_{n,\omega}^{(\mathcal J)}
\rightharpoonup
\mathcal N$
weakly as $n\to\infty$.
By the upper envelope theorem (Theorem \ref{thm:upper-envelope}) for logarithmic potentials, for $\mathbb P$-a.e. $\omega$,
there exists a set $Q_{\omega}$ of  capacity zero, such that
for every $E\notin Q_{\omega}$,
$$\limsup_{n\to\infty}\int_{\mathbb R}\log|E-x|d\mathcal N_{n,\omega}^{(\mathcal J)}(x)=\int_{\mathbb{R}}\log|E-x|d\mathcal N(x).$$
Thus, for $\mathbb P$-a.e. $\omega$,
$$
\limsup_{n\to\infty}
\frac1n\log|\det(E-\mathcal J_{\omega,[0,n-1]})|=2m\int_{\mathbb{R}}\log|E-x|d\mathcal N(x)
$$
for every  $E\notin Q_{\omega}$. In the uniquely ergodic
setting, the above equation holds for every $\omega$ with  $E\notin Q_{\omega}$.

Applying  Theorem \ref{thouless}, we have
\begin{equation}\label{singular-value-(2)}
\sum_{j=1}^m L_j(E)=m\int_{\mathbb R}\log|E-x|d \mathcal N(x)-\int_{\Omega}\log|\det A(\omega)|\,d\mathbb P(\omega).
\end{equation}

On the other hand, using the standard calculation \cite{CS}, we get
$$
\det(E-\mathcal J_{\omega,[0,n-1]})=\left(
\prod_{j=0}^{n-1}\det\mathcal A_j(\omega)\right)
\det D_n(E,\omega).
$$
Therefore,
\begin{equation}\label{singular-value-(1)}
\begin{aligned}
&\quad\frac1n\log|\det D_n(E,\omega)|=\frac1n\log|\det(E-\mathcal J_{\omega,[0,n-1]})|-
\frac1n\sum_{j=0}^{n-1}\log|\det\mathcal A_j(\omega)|.
\end{aligned}
\end{equation}
By the definition of the flipped hopping matrices,
$$
\log|\det\mathcal A_j(\omega)|
=
\log|\det A(T^j\omega)|
+
\log|\det A(T^{-j-2}\omega)|.
$$
Since $T$ is invertible and ergodic, the forward and backward
versions of Birkhoff's theorem imply that, for
$\mathbb P$-almost every $\omega$,
\begin{equation}\label{equ:ergodic}
\lim_{n\rightarrow \infty}\frac1n\sum_{j=0}^{n-1}\log|\det\mathcal A_j(\omega)|=
2\int_{\Omega}\log|\det A(\omega)|d\mathbb P(\omega).
\end{equation}
Moreover, if $(\Omega,\mathbb P,T)$ is uniquely ergodic and $A(\cdot)$ are continuous and invertible, by uniquely ergodic theorem, equation \eqref{equ:ergodic} holds for every $\omega.$
Combining with equations \eqref{singular-value-(2)}, \eqref{singular-value-(1)}, and \eqref{equ:ergodic}, we have the conclusion.

\end{proof}

\subsection{The smallest singular value}

Let $s_{k}(D_n(E,\omega))$ be the $k$-th singular value of $D_n(E,\omega)$, and $s_{1}\geq s_2 \geq \cdots \geq s_{2m}$. We have the following estimate.
\begin{proposition}\label{lem:singular}
For $\mathbb P$-a.e. $\omega$, there exists a Borel set
$Q_{\omega}\subset\mathbb R$ with capacity zero such that for
every $E\notin Q_{\omega}$,
$$
\limsup_{n\to\infty}\frac1n\log s_{2m}(D_n(E,\omega))\ge L_m(E).
$$Moreover, if $(\Omega,\mathbb P,T)$ is uniquely ergodic and $A(\cdot),B(\cdot)$ are continuous, then the above conclusion holds for every $\omega$.
\end{proposition}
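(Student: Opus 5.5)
The plan is to use the identity
$$
s_{2m}(D_n(E,\omega))=\frac{|\det D_n(E,\omega)|}{\|\wedge^{2m-1}D_n(E,\omega)\|},
$$
which holds for any invertible $2m\times 2m$ matrix since $\|\wedge^{2m-1}D\|=s_1\cdots s_{2m-1}$. When $\det D_n=0$ the inequality at that $n$ is vacuous, and the $\limsup$ is governed by Lemma~\ref{lem:1}. Taking $\frac1n\log$ and passing along a subsequence $n_k$ realizing the $\limsup$ in Lemma~\ref{lem:1}, it suffices to prove a \emph{full} upper bound
\begin{equation*}
\limsup_{n\to\infty}\frac1n\log\|\wedge^{2m-1}D_n(E,\omega)\|\le 2\sum_{j=1}^mL_j(E)-L_m(E)
\end{equation*}
for every $E$ outside a capacity zero set. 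The $\limsup$ of the upper bound controls every subsequence. Then along $n_k$ we get $\frac{1}{n_k}\log s_{2m}\ge 2\sum L_j - (2\sum L_j - L_m)-o(1)=L_m(E)-o(1)$, which is the claim. The exceptional set is the union of the set from Lemma~\ref{lem:1} and the one from the exterior-power bound. A countable union of Borel capacity zero sets has capacity zero, since capacity zero sets are polar.

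For the exterior-power bound, I express $D_n$ in terms of the original whole-line transfer matrices. The top $m\times 2m$ rows of $D_n$ are the forward solutions $u_n$, $n\ge0$, of $\mathcal H_\omega u=Eu$ with initial data at sites $0,-1$ given by the two column blocks. The bottom rows are the backward solutions $u_{-n-1}$. Hence
$$
D_n=\begin{pmatrix}P\,\Phi_n(E,\omega)\,C\\ P\,\Phi_{-n}(E,\omega)\,C'\end{pmatrix},
$$
where $\Phi_{\pm n}$ are the $2m\times2m$ forward and backward cocycle transfer matrices of $(T,T_E)$, $P=(I_m\ 0)$, and $C,C'$ are fixed matrices depending on $\omega$ only through $A_{-1}$. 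Each row block has rank at most $m$. By Cauchy--Binet and Laplace expansion along the two row blocks, each $(2m-1)$-minor of $D_n$ is a sum of products of an $m$-minor from one block and an $(m-1)$-minor from the other. Therefore
$$
\|\wedge^{2m-1}D_n\|\lesssim \|\wedge^m\Phi_n\|\,\|\wedge^{m-1}\Phi_{-n}\|+\|\wedge^{m-1}\Phi_n\|\,\|\wedge^{m}\Phi_{-n}\|,
$$
with constants depending only on $\omega$.

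It remains to bound $\frac1n\log\|\wedge^k\Phi_{\pm n}(E,\omega)\|\le L^k(E)+o(1)$ simultaneously for all $E$, for a.e.\ $\omega$, with $k=m-1,m$. This is the Craig--Simon type statement, and I expect it to be the main obstacle. The approach is that $E\mapsto\frac1n\log\|\wedge^k\Phi_n(E,\omega)\|$ is subharmonic and locally uniformly bounded above. For each fixed $E$ in a countable dense set, and a.e.\ $\omega$, the Kingman and Oseledets theorems give the limit $L^k(E)$, applied forward and also backward since $T$ is invertible. The function $E\mapsto L^k(E)$ is continuous on $\mathbb R$ in the relevant sense, or at least upper semicontinuous via subadditivity. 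The Craig--Simon argument then upgrades this, via a Hartogs-type lemma for subharmonic functions, to $\limsup\le L^k(E)$ for \emph{every} real $E$. For the uniquely ergodic continuous case, the uniform subadditive ergodic theorem gives the upper bound for every $\omega$ directly.

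Combining the pieces, $L^m+L^{m-1}=2\sum_{j\le m}L_j-L_m$, which gives the required upper bound.
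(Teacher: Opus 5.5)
Your skeleton is the paper's. It has the same five ingredients:
\begin{itemize}
\item the identity $|\det D_n|=s_{2m}(D_n)\,\|\wedge^{2m-1}D_n\|$;
\item the $\limsup$ asymptotics of $\frac1n\log|\det D_n|$ from Lemma~\ref{lem:1};
\item the description of the two row blocks of $D_n$ by forward and backward transfer matrices;
\item a Laplace expansion bounding $\|\wedge^{2m-1}D_n\|$ by $\max_{a\in\{m-1,m\}}\|\wedge^a\Phi_n\|\,\|\wedge^{2m-1-a}\Phi_{-n}\|$;
\item an upper bound $L^k(E)$ on the exterior-power growth, valid for every $E$, which yields $L^m+L^{m-1}=2\sum_{j\le m}L_j-L_m$.
\end{itemize}
There is a minor slip: with $P=(I_m\ 0)$, $P\Phi_{-n}$ returns $u_{-n}$; you want $(0\ I_m)\Phi_{-n}$, and in the paper's normalization $C=I$. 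Also, identifying the growth of $\wedge^k\Phi_{-n}$ with $L^k$ uses the symmetry $L_{2m+1-j}=-L_j$. The paper obtains this from the symplectic structure and $T^-=\mathcal R(T^+)^{-1}\mathcal R$; applying Kingman backward alone only gives minus the sum of the bottom $k$ exponents.

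The step that would fail is your proposed proof of the Craig--Simon bound. Convergence of $u_n(E)=\frac1n\log\|\wedge^k\Phi_n(E,\omega)\|$ on a countable dense set of real energies says nothing about other energies. A countable set has capacity zero, and Hartogs-type lemmas need the bound at area-almost every point of an open subset of $\mathbb C$. For instance, take $v$ subharmonic with $v=-\infty$ on the countable set but $v>-\infty$ a.e.\ on $\mathbb R$. Then $\max(v,-n)\to-\infty$ on the countable set, while its $\limsup$ elsewhere is $v$, which no bound on the countable set controls. Continuity or upper semicontinuity of $L^k$ on $\mathbb R$ does not repair this.

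The actual argument works in $\mathbb C$:
\begin{itemize}
\item By Fubini, for a.e.\ $\omega$ one has $u_n(z)\to L^k(z)$ for Lebesgue-a.e.\ $z\in\mathbb C$.
\item For real $E$, use the sub-mean value inequality on the disc $D(E,r)$ together with a uniform upper bound on $u_n$ there. That bound comes from boundedness of $A,B$ and Birkhoff applied to $\log^+\|A^{-1}\|$.
\item Reverse Fatou then gives $\limsup_n u_n(E)\le\frac{1}{\pi r^2}\int_{D(E,r)}L^k$.
\item Subharmonicity of $L^k$ on $\mathbb C$ lets $r\to0$.
\end{itemize}
The paper simply cites this result (Craig--Simon, Goldsheid--Sodin), and you may do the same. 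Your treatment of the uniquely ergodic case, via the uniform upper bound for each fixed $E$, matches the paper's appeal to Furman.
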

\begin{proof}
Since
$|\det D_n(E,\omega)|=
s_{2m}(D_n(E,\omega))\|\wedge^{2m-1}D_n(E,\omega)\|,
$ the proof of Proposition \ref{lem:singular} reduces to the following lemma:

\begin{lemma}\label{lem:2}
For $\mathbb P$-a.e. $\omega$,
$$
\limsup_{n\to\infty}
\frac1n\log\|\wedge^{2m-1}D_n(E,\omega)\|
\le2\sum_{j=1}^mL_j(E)-L_m(E).
$$    
Moreover, if $(\Omega,\mathbb P,T)$ is uniquely ergodic and $A(\cdot),B(\cdot)$ are continuous, then the above conclusion holds for every $\omega$. 
\end{lemma}
\begin{proof}
We first give the upper estimate of $\wedge^qD_n(E,\omega)$ in terms of the original transfer matrix.
The forward one-step transfer matrix is defined by
\begin{equation*}
T^+(E,\omega):=
\begin{pmatrix}
A(\omega)^{-1}(EI_m-B(\omega))
&-A(\omega)^{-1}A(T^{-1}\omega)^*\\
I_m&0
\end{pmatrix}.
\end{equation*}
For $n\geq1$, define
\begin{equation*}
\mathcal T^+(n;E,\omega):=
T^+(E,T^{n-1}\omega)\cdots
T^+(E,T\omega)
T^+(E,\omega).
\end{equation*}
Thus,
\begin{equation}\label{eq:forwardp}
\binom{u_n}{u_{n-1}}=
\mathcal T^+(n;E,\omega)
\binom{u_0}{u_{-1}}.
\end{equation}
Similarly, the backward one-step transfer matrix is defined by
\begin{equation*}
T^-(E,\omega):=
\begin{pmatrix}
(A(T^{-1}\omega)^*)^{-1}(EI_m-B(\omega))&
-(A(T^{-1}\omega)^*)^{-1}A(\omega)\\
I_m&0
\end{pmatrix}.
\end{equation*}
For $n\geq1$, define
\begin{equation*}
\mathcal T^-(n;E,\omega):=
T^-(E,T^{-n}\omega)\cdots T^-(E,T^{-2}\omega)T^-(E,T^{-1}\omega).
\end{equation*}
It follows that
\begin{equation}\label{eq:backwardp}
\binom{u_{-n-1}}{u_{-n}}=
\mathcal T^-(n;E,\omega)
\binom{u_{-1}}{u_0}.
\end{equation}

\begin{lemma}
\label{lem:flipped}
For every $0\le q\le 2m$, there exists a constant $C_{m,q}>0$, independent of
$n,E,\omega$, such that
$$
\|\wedge^qD_n(E,\omega)\|
\le C_{m,q}\max_{\max\{0,q-m\}\leq a\leq\min\{q,m\}}
\left\{\|\wedge^a\mathcal T^+(n;E,\omega)\|
\|\wedge^{q-a}\mathcal T^-(n;E,\omega)\|
\right\}.
$$
\end{lemma}

\begin{proof}

For every
$v=\binom{x}{y}\in\mathbb C^{2m},$
let $u=\{u_j\}_{j\in\mathbb Z}$ be the unique solution of
$\mathcal H_\omega u=Eu$
with initial data
$u_0=x,u_{-1}=y.$
Then
\begin{equation}\label{eq:d}
D_n(E,\omega)\binom{x}{y}
=\binom{u_n}{u_{-n-1}}.
\end{equation}

 By equations \eqref{eq:forwardp} and \eqref{eq:backwardp}, 
\begin{equation*}
\binom{u_n}{u_{n-1}}=
\mathcal T^+(n;E,\omega)
\binom{u_0}{u_{-1}},\qquad
\binom{u_{-n-1}}{u_{-n}}=
\mathcal T^-(n;E,\omega)
\binom{u_{-1}}{u_0}.
\end{equation*}
Write
$$
\mathcal T^+(n;E,\omega)=
\begin{pmatrix}
T_{11}^+(n)&T_{12}^+(n)\\
T_{21}^+(n)&T_{22}^+(n)
\end{pmatrix},\qquad
\mathcal T^-(n;E,\omega)=
\begin{pmatrix}
T_{11}^-(n)&T_{12}^-(n)\\
T_{21}^-(n)&T_{22}^-(n)
\end{pmatrix}.
$$
Then

\begin{equation}\label{eq:transfer}
u_n=
T_{11}^+(n)x+T_{12}^+(n)y, \qquad u_{-n-1}=
T_{12}^-(n)x+T_{11}^-(n)y.
\end{equation}

Combining equations \eqref{eq:d} and
\eqref{eq:transfer}, we find
$$D_n(E,\omega)\binom{x}{y}=
\binom{u_n}{u_{-n-1}}=
\begin{pmatrix}
T_{11}^+(n)&T_{12}^+(n)\\
T_{12}^-(n)&T_{11}^-(n)
\end{pmatrix}
\binom{x}{y}.
$$
Since this identity holds for every $\binom{x}{y}\in\mathbb C^{2m}$, we conclude that
\begin{equation*}\label{eq:flipped}
D_n(E,\omega)=\begin{pmatrix}
T_{11}^+(n)&T_{12}^+(n)\\
T_{12}^-(n)&T_{11}^-(n)
\end{pmatrix}.
\end{equation*}

Let
$$
A=(T_{11}^+(n)\,\,\ T_{12}^+(n)),
\quad
B=(T_{12}^-(n)\,\,\ T_{11}^-(n)).
$$
For a matrix $M$, 
$M[I,J]$ denotes the minor with row set $I$ and column set
$J$.
Fix $I,J\subset\{1,\ldots,2m\}$ with $|I|=|J|=q$. 
Let
$$I_+:=I\cap\{1,\ldots,m\},
I_-:=\{i-m:i\in I\cap\{m+1,\ldots,2m\}\}.
$$
Set
$a:=|I_+|.$
Then
$
|I_-|=q-a,
$
and necessarily
$$
\max\{0,q-m\}\leq a\leq\min\{q,m\}.
$$
The corresponding $q\times q$ minor
$\det D_n[I,J]$ has the form
$$
\det
\begin{pmatrix}
A[I_+,J]\\
B[I_-,J]
\end{pmatrix}.
$$
By the Laplace expansion with respect to the first $a$ rows,
$$
\det
\begin{pmatrix}
A[I_+,J]\\
B[I_-,J]
\end{pmatrix}
=\sum_{\substack{J_+\subset J\\ |J_+|=a}}\pm
\det A[I_+,J_+]\,
\det B[I_-,J_-],
$$
where $J_{-}=J\setminus J_+$.
Hence
$$
|\det D_n[I,J]|
\le
\sum_{\substack{J_+\subset J\\ |J_+|=a}}
|\det A[I_+,J_+]|\,
|\det B[I_-,J_-]|.
$$
Since $A[I_+,J_+]$ is an $a\times a$ minor of
$\mathcal T^+(n;E,\omega)$, we have
$$
|\det A[I_+,J_+]|\le
\|\wedge^a\mathcal T^+(n;E,\omega)\|.
$$
Similarly, since $B=( T_-^{12}(n)\ \  T_-^{11}(n))$ consists of the first $m$ rows of $\mathcal T_-(n;E,\omega)$ up to a permutation of the two column blocks, each submatrix $B[I_-,J_-]$ coincides, up to sign, with a $(q-a)\times(q-a)$ minor of $\mathcal T_-(n;E,\omega)$. Consequently,
$$
|\det B[I_-,J_-]|\le
\|\wedge^{q-a}\mathcal T_-(n;E,\omega)\|.
$$
Thus
$$
|\det D_n[I,J]|
\le
\binom{q}{a}
\|\wedge^a\mathcal T^+(n;E,\omega)\|
\|\wedge^{q-a}\mathcal T^-(n;E,\omega)\|.
$$Therefore,
$$
\|\wedge^qD_n(E,\omega)\|
\le C_{m,q}\max_{\max\{0,q-m\}\leq a\leq\min\{q,m\}}
\left\{\|\wedge^a\mathcal T^+(n;E,\omega)\|
\|\wedge^{q-a}\mathcal T^-(n;E,\omega)\|
\right\}.
$$
\end{proof}

Once we have this statement, we can now proceed with the proof of Lemma \ref{lem:2}.
    Let $q=2m-1$. It follows that
\begin{equation*}
\|\wedge^{2m-1}D_n(E,\omega)\|\leq C_m
\max_{a\in\{m-1,m\}}\left\{
\|\wedge^a\mathcal T^+(n;E,\omega)\|
\|\wedge^{2m-1-a}\mathcal T^-(n;E,\omega)\|
\right\}.
\end{equation*}
We obtain
\begin{align*}
\limsup_{n\to\infty}
\frac1n\log\|\wedge^{2m-1}D_n(E,\omega)\|
&\leq\max_{a\in\{m-1,m\}}\Bigg\{\limsup_{n\to\infty}
\frac1n\log\|\wedge^a\mathcal T^+(n;E,\omega)\|\\
&+\limsup_{n\to\infty}\frac1n
\log\|\wedge^{2m-1-a}\mathcal T^-(n;E,\omega)\|
\Bigg\}.
\end{align*}
A direct computation shows that
\begin{equation*}
T^+(E,\omega)^* J(T\omega)T^+(E,\omega)=J(\omega).
\end{equation*}
where \begin{equation*} J(\omega)=
\begin{pmatrix}
0&-A(T^{-1}\omega)^*\\
A(T^{-1}\omega)&0
\end{pmatrix}.
\end{equation*}
Thus the cocycle $(T,T^+(E,\cdot))$ is symplectic with respect to the
structure $J(\omega)$. Therefore, its Lyapunov
exponents satisfies:
$L_j(E)=-L_{2m+1-j}(E)$ for $j=1,\dots,m$.
Moreover, since  $\mathcal R:=
\begin{pmatrix}
0&I_m\\
I_m&0
\end{pmatrix}
$
and $\mathcal R^{-1}=\mathcal R$, a direct calculation gives
\begin{equation*}
T^-(E,\omega)=
\mathcal R T^+(E,\omega)^{-1} \mathcal R.
\end{equation*}
Since Lyapunov exponents are invariant under conjugacy, the cocycles $(T,T^+(E,\cdot))$ and $(T,T^-(E,\cdot))$ share the same Lyapunov exponents $\{L_i(E)\}_{i=1}^{2m}$
due to  $L_j(E)=-L_{2m+1-j}(E)$.

As $(T,T^+(E,\cdot))$ is ergodic, we recall the Craig--Simon  type upper bound in the form needed below.
\begin{lemma}\label{lem:Craig-Simon}\cite{CS83-Duke,GS24-IMRN}
For $\mathbb P$-almost every $\omega$, every $E\in\mathbb R$, and every
$1\leq i\leq2m$,
\begin{equation}\label{eq:Craig-Simon}
\limsup_{n\to\infty}\frac{1}{n}\log\|\wedge^i\mathcal T^\pm(n;E,\omega)\|
\leq L^i(E),
\end{equation}
where
$L^i(E):=\sum_{k=1}^{i}L_k(E).$
\end{lemma}
Applying Lemma \ref{lem:Craig-Simon}, one can obtain that 
\begin{align*}
\limsup_{n\to\infty}
\frac1n\log\|\wedge^{2m-1}D_n(E,\omega)\|
&\leq\max_{a\in\{m-1,m\}}\left\{
L^a(E)+L^{2m-1-a}(E)
\right\}\\
&=L^{m-1}(E)+L^m(E)=
2\sum_{j=1}^mL_j(E)-L_m(E).
\end{align*}
If $(\Omega,\mathbb P,T)$ is uniquely ergodic and $A(\cdot),B(\cdot)$
are continuous, then recall famous result of Furman \cite{furman}, \eqref{eq:Craig-Simon} holds for every $\omega\in\Omega$.

\end{proof}
By the fact that $\limsup(a_n-b_n)\geq \limsup a_n -\limsup b_n$,  Proposition \ref{lem:singular} follows immediately from Lemma \ref{lem:1} and Lemma \ref{lem:2}.
\end{proof}

\subsection{Proof of main results}

\begin{proof}[Proof of Theorem \ref{thm:main}]
Fix $\omega$ for which the Proposition \ref{lem:singular} holds.
Suppose that
$
E\in S_{+}\setminus Q_{\omega}.
$
Then $L_m(E)>0$, by Proposition \ref{lem:singular}, we have
$$
\limsup_{n\to\infty}
\frac1n\log s_{2m}(D_n(E,\omega))
\ge
L_m(E)>0.
$$
Therefore, there exist a subsequence $n_j\to\infty$ and a number $\eta(E)>0$
such that
$$
s_{2m}(D_{n_j}(E,\omega))\ge e^{\eta(E)n_j}.
$$
On the other hand, by the Shnol theorem \cite{Lenz,Poerschke,Schlag}, for $\tau_\omega$-almost every
$E$, there exists a nonzero polynomially bounded solution of $
\mathcal J_\omega\psi=E\psi.$ In other words,
for $\tau_{\omega}$-almost every $E$, there exists  a nonzero vector ${v}_{0}$ satisfying
$
\|D_n(E,\omega){v}_{0}\|\le C(1+n).
$
These are incompatible unless
$
\tau_{\omega}(S_{+}\setminus Q_{\omega})=0.
$
By Lemma \ref{lem:measure}, we have that
$
\mu_\omega(S_{+}\setminus Q_\omega)=
\tau_{\omega}(S_{+}\setminus Q_\omega)=0.
$
\end{proof}

\begin{proof}[Proof of Corollary \ref{cor:uniform}]
 Proposition \ref{lem:singular} holds for every $\omega\in\Omega$ when
$(\Omega,\mathbb P,T)$ is uniquely ergodic and $A(\cdot),B(\cdot)$ are
continuous, the proof is the same as that of Theorem \ref{thm:main}.
\end{proof}

\begin{proof}[Proof of Theorem \ref{thm:scalar}]
It is an immediate corollary of Theorem \ref{thm:main} in the case $m=1$ and $A(\cdot)\equiv 1$.
\end{proof}

\section*{Acknowledgements} 
Qi Zhou would like to thank David Damanik for posing the block version of
the problem considered here, and Jake Fillman for useful discussions and
for bringing the reference \cite{Levi} to our attention.
 Zhenfu Wang is supported by NSFC grant (124B2011) and Nankai Zhide Foundation. Bei Zhang is supported by Nankai Zhide Foundation. Qi Zhou is supported by NSFC grant (12531006, 12526201) and Nankai Zhide Foundation.

\end{document}